\newcommand{\sing}{{\rm Sing}}
\newtheorem{teo}{Theorem}[section]
\newtheorem{prop}[teo]{Proposition}
\newtheorem{example}[teo]{Example}
\newtheorem{cor}[teo]{Corollary}
\newtheorem{lem}[teo]{Lemma}
\newtheorem*{cor*}{Corollary}
\newtheorem{maintheorem}{Theorem}
\newtheorem*{lem*}{Lemma}
\newtheorem*{teorA'}{Theorem A'}
\theoremstyle{definition}
\newcommand{\R}{\mathbb{R}}
\newcommand{\Pe}{\mathbb{P}}
\newcommand{\Z}{\mathbb{Z}}
\newcommand{\Q}{\mathbb{Q}}
\newcommand{\C}{\mathbb{C}}
\newcommand{\CC}{\mathsmaller{\mathbb{C}}}
\newcommand{\A}{{\mathcal{A}}}
\newcommand{\D}{\mathcal{D}}
\newcommand{\F}{\mathcal{F}}
\newcommand{\G}{\mathcal{G}}
\newcommand{\LL}{{\mathcal L}}
\newcommand{\im}{{\rm Im}}
\newcommand{\re}{{\rm Re}}
\newcommand{\cl}[1]{\mbox{$\mathcal{#1}$}}
\newcommand{\codim}{{\rm codim}}
\newcommand{\HV}{{\rm Hol}_{\, vir}}
\newcommand{\diff}{{\rm Diff}}
\newcommand{\holv}{{\rm Hol}^{\, vir}}
\newcommand{\fun}[5]{\renewcommand{\arraystretch}{1.5}       \begin{array}{crcl}
       #1: & #2 &\rightarrow & #3 \\
       & #4 &\mapsto & #5
       \end{array} \renewcommand{\arraystretch}{1}
       }
\newcommand*\xbar[1]{ %
   \hbox{ %
     \vbox{%
       \hrule height 0.3pt 
       \kern0.35ex
       \hbox{%
         \kern-0.1em
         \ensuremath{#1}%
         \kern-0.1em
       }%
     }%
   }%
}
\newcommand*\xxbar[1]{%
   \hbox{%
     \vbox{%
       \hrule height 0.3pt 
       \kern0.4ex
       \hbox{%
         \kern-0.1em
         \ensuremath{#1}%
         \kern-0.1em
       }%
     }%
   }%
}
\begin{document}

\setcounter{section}{0}
\setcounter{teo}{0}
\setcounter{exe}{0}

\author{Arturo Fern\'andez-P\'erez \& Rog\'erio   Mol \& Rudy Rosas}
\title{On singular real analytic  Levi-flat foliations}
 \date{}

\subjclass[2010]{37F75, 32S65, 32V40}
 \keywords{Holomorphic foliation, CR-manifold, Levi-flat variety}
\thanks{First and second  authors partially financed   CNPq-Universal}
\maketitle

\begin{abstract}
A singular real analytic foliation $\mathcal{F}$ of real codimension one on an $n$-di\-men\-sional complex manifold $M$   is Levi-flat if each of its leaves is foliated by immersed complex manifolds of dimension $n-1$. These complex manifolds are leaves of a singular
real analytic foliation $\mathcal{L}$   which is tangent to $\mathcal{F}$.
In this article, we classify germs of Levi-flat foliations at $(\C^{n},0)$ under the hypothesis that $\mathcal{L}$ is a germ holomorphic foliation. Essentially, we prove that there are two possibilities for $\mathcal{L}$, from which the classification of $\F$ derives: either
it has a meromorphic first integral or is defined by a closed rational $1-$form.
Our local results also allow us to classify real algebraic Levi-flat foliations
on the complex projective space $\Pe^{n} = \Pe^{n}_{\C}$.
\end{abstract}


 \medskip \medskip

\section{Introduction}
Let $U \subset \C^{n}$ be an open subset and
 $H \subset U$ be a real analytic submanifold of real codimension one.
For each   $p \in H$, there is a unique complex vector space of dimension $n-1$ contained in the tangent space $T_{p}H$, which is  given by $\cl{L}_{p} = T_{p}H \cap i T_{p}H$, where $i = \sqrt{-1}$.
  When the real analytic distribution of complex hyperplanes $p \in H \mapsto \cl{L}_{p}$  is integrable, in the sense of Frobenius, we say that $H$ is a {\em Levi-flat} hypersurface.
Thus, $H$ is foliated by   immersed complex manifolds of dimension $n-1$, defining the so-called \emph{Levi foliation}.   The following  local normal form was proved by E. Cartan \cite[Th. IV]{cartan1933}: at each $p \in H$, there are   holomorphic coordinates $(z_{1},\ldots,z_{n})$ in a neighborhood $V$ of $p$
such that
\begin{equation}\label{formalocal-hlf}
H \cap V = \{\im(z_{n}) = 0\}.
\end{equation}
In particular, this says that the Levi foliation on $H \cap V$ is given by
$z_{n} = c$, for $c \in \R$.

Consider now   a (non-singular) real analytic   foliation  $\F$ of real codimension one    on the open subset $U \subset \C^{n}$.
We say that $\F$ is  \emph{Levi-flat} if its  leaves are, locally, Levi-flat hypersurfaces. Thus, there exists a real analytic   foliation $\LL = \LL(\F)$ on $U$ whose leaves are immersed complex manifolds of dimension $n-1$, entirely contained in the leaves of $\F$. Keeping the terminology of the hypersurface case, this underlying foliation is also  referred  to as \emph{Levi foliation}. The above definitions make sense both in the
local setting, as germs of Levi-flat hypersurfaces or foliations at $(\C^{n},0)$, or   in a global context,
as objects lying in an $n$-dimensional complex manifold $M$,  locally defined
in open coordinate  sets.

A basic example  is given by the  Levi-flat  foliation $\F_{0}$ defined   by level sets of the
 form $\re(z_{n}) = c$, for $c \in \R$,
in coordinates $(z_{1},\ldots,z_{n})$ of $\C^{n}$. Its associated
  Levi foliation $\LL_{0} = \LL(\F_{0})$ is given by $z_{n} = c$, for $c \in \C$. The germ of $\F_{0}$ at $(\C^{n},0)$   actually furnishes a local normal form for Levi-flat foliations under the context of CR-conjugations
\cite[Th. 4.1]{cerveau2004}. This means that, given a germ of Levi-flat foliation $\F$ at $(\C^{n},0)$
with  Levi foliation $\LL$,    there exists a germ of  real analytic diffeomorphism $\Phi: (\C^{n},0) \to (\C^{n},0)$  that, in some neighborhood of $0 \in \C^{n}$,  conjugates $\F$ and  $\F_{0}$
 together with  their Levi foliations $\LL$ and $\LL_{0}$, having the additional property  of being a holomorphic map
when restricted to each leaf of $\LL$ assuming values in the corresponding leaf of $\LL_{0}$.

In the singular case,    an irreducible real analytic   hypervariety $H \subset U \subset \C^{n}$ is said to be \emph{Levi-flat} if its \emph{regular part} --- the set of points of $H$ near which it is a real analytic manifold of real codimension one --- is a Levi-flat hypersurface.
We point out that, in this case, the Levi foliation of $H$ is not always the
restriction of a singular holomorphic foliation defined in a neighborhood of $H$ --- examples for this situation can be found in \cite{brunella2007} and \cite{fernandez2013}.
However, if the Levi foliation of a germ of real analytic Levi-flat hypervariety at $0 \in \C^{n}$ extends
to a holomorphic foliation in a neighborhood of $0 \in \C^{n}$, a theorem by Cerveau and Lins Neto \cite{cerveau2011} asserts that the ambient foliation has a meromorphic first integral,
that is, a non-constant meromorphic function that is constant along its leaves.

Likewise, we say that a  singular real analytic  foliation $\F$ is   \emph{Levi-flat} if it is
 Levi-flat
outside its singular set $\sing(\F)$.
Its \emph{Levi foliation} $\LL = \LL(\F)$ then  turns
out to be a singular real analytic foliation whose leaves are immersed complex manifolds  of  dimension $n-1$
(see Sect. \ref{section-preliminaries}).
Note that
 $\LL$ can be non-holomorphic --- an example can be found in \cite[Sect. 2]{cerveau2004}.
In this article, our purpose  is to classify Levi-flat foliations whose underlying Levi foliation is holomorphic --- we assume this fact throughout the text.
The  same hypothesis has  been considered in the article \cite{belko2003}.  Its results,
 in the local case, lead to the conclusion that   $\LL$ has a Liouvillean integrating factor, which   is   weaker then what our classification  proposes. We should also mention the paper \cite{landure2008}, were
Levi-flat foliations of class $C^{1}$ with holomorphic underlying foliation are studied.

As a first step,  we examine   Levi-flat foliations     defined by the levels of    germs  of
real analytic meromorphic functions  with real values. We obtain the following:
\begin{maintheorem}
\label{teo-tangent-to-levels}
A germ of holomorphic foliation of codimension one at $(\C^{n},0)$, $n \geq 2$,
 that is tangent to the levels of a non-constant real meromorphic function with real values
  admits a meromorphic first integral.
\end{maintheorem}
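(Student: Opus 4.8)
The plan is to read the hypothesis as a differential relation on a defining form of the foliation, then to complexify and specialize a slice. Let $\G$ denote the given holomorphic foliation, defined near the origin by a holomorphic $1$-form $\omega=\sum_i a_i(z)\,dz_i$ with $\codim\,\sing(\omega)\geq 2$, and write the real meromorphic function as $F=g/h$ with $g,h$ real analytic. Tangency of $\G$ to the levels of $F$ means that $dF$ annihilates the (real, hence complex) tangent distribution of $\G$ on the regular part. Since each leaf is a complex manifold, its $(1,0)$ tangent space is $\ker\omega$; and because $F$ is real-valued one has $dF=\partial F+\overline{\partial F}$, so evaluating on $(1,0)$ tangent vectors kills the $\overline{\partial F}$ summand. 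Thus the tangency condition is equivalent to $\partial F|_{\ker\omega}=0$, that is,
\begin{equation}\label{eq:tang}
\partial F\wedge\omega=0,\qquad \partial F=\sum_i\frac{\partial F}{\partial z_i}\,dz_i,
\end{equation}
a relation that unwinds to the coefficient identities $\tfrac{\partial F}{\partial z_i}\,a_j=\tfrac{\partial F}{\partial z_j}\,a_i$; the conjugate relation $\overline{\partial F}\wedge\overline{\omega}=0$ then holds automatically.

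The key step is complexification. Writing real analytic germs in the variables $(z,\bar z)$ and replacing $\bar z$ by an independent variable $w\in(\C^{n},0)$, the germs $g,h$ extend to holomorphic germs $g(z,w),h(z,w)$ on $(\C^{n}\times\C^{n},0)$, so $\tilde F=g/h$ is a meromorphic germ there with $\tilde F(z,\bar z)=F$. Since \eqref{eq:tang} is a set of identities between real analytic (meromorphic) germs in $(z,\bar z)$, the identity principle for complexifications promotes them to
\begin{equation}\label{eq:tangc}
d_z\tilde F\wedge\omega=0\quad\text{on } (\C^{n}\times\C^{n},0),
\end{equation}
where $d_z$ is the differential in the $z$ variables and $\omega$ is regarded as depending on $z$ alone.

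Now I specialize the second factor. For $w_0\in(\C^{n},0)$ generic the germ $h(z,w_0)$ is not identically zero, so $\phi(z):=\tilde F(z,w_0)=g(z,w_0)/h(z,w_0)$ is a well-defined meromorphic germ on $(\C^{n},0)$. Freezing $w=w_0$ in \eqref{eq:tangc} gives $d\phi\wedge\omega=0$, i.e. $\phi$ is constant along the leaves of $\G$. It remains to choose $w_0$ so that $\phi$ is non-constant, and this is the only verification that requires an argument: if $\phi$ were constant for every admissible $w_0$, then $\tilde F$ would not depend on $z$, hence $\partial F\equiv 0$; since $F$ is real-valued one has $\overline{\partial F}=\overline{\partial F}$ in the decomposition $dF=\partial F+\overline{\partial F}$, so $\overline{\partial F}\equiv 0$ as well and $dF\equiv 0$, contradicting that $F$ is non-constant. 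Therefore some slice $\phi$ is a non-constant meromorphic first integral of $\G$, which is the assertion.

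The main obstacle I anticipate lies entirely in the first step: correctly passing from the phrase ``tangent to the levels'' to the pointwise relation \eqref{eq:tang}, handling the indeterminacy locus of $F$ and the singular set $\sing(\omega)$ so that the coefficient identities are genuine identities of meromorphic, complexifiable germs (and, if one prefers the form $\partial F=\lambda\,\omega$, so that the division is legitimate). Once \eqref{eq:tang} is in place, the complexification \eqref{eq:tangc} and the slicing are formal. I note that one could instead observe that a level $\{F=c\}$ is a real analytic Levi-flat hypervariety whose Levi foliation extends to the holomorphic $\G$, and invoke the Cerveau--Lins Neto theorem quoted above; I prefer the direct slicing argument because it is self-contained and exhibits the first integral explicitly as a generic $w$-slice of the complexification of $F$.
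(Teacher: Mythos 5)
Your proposal is correct, and it is a genuinely different — and considerably more elementary — route than the paper's. The paper first reduces to $n=2$ by a generic two-dimensional section, then splits into two cases: if a real codimension-one level of $f$ accumulates at the origin, it invokes the Cerveau--Lins Neto theorem \cite{cerveau2011}; otherwise it shows $\G$ is a non-dicritical generalized curve whose virtual holonomy groups are of center type, and builds the first integral through the dynamical results Lemma \ref{lem-dynamics}, Proposition \ref{first-integral} and Proposition \ref{log-first-integral} (resting on the Flower Theorem, P\'erez-Marco's theorem and Mattei--Moussu). You instead encode tangency as the identity $\partial F\wedge\omega=0$ (equivalent, since leaves are complex and $F$ is real-valued, to the paper's own formulation $dF\wedge\omega\wedge\overline{\omega}=0$), clear denominators so that the identity is one of real analytic germs on a full neighborhood, complexify it to $\bigl(H\,d_zG-G\,d_zH\bigr)\wedge\omega=0$ on $(\C^{2n},0)$, and take a generic Segre-type slice $w=w_0$; the slice $\phi_{w_0}=G(\cdot,w_0)/H(\cdot,w_0)$ satisfies $d\phi_{w_0}\wedge\omega=0$, and if every admissible slice were constant then $G$ and $H$ would be proportional with a factor depending only on $w$, making $F$ anti-meromorphic and real-valued, hence constant — a contradiction. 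This is exactly the point that makes your argument work where the analogous slicing fails in the Cerveau--Lins Neto setting: there only one level is invariant, so slicing yields invariant hypersurfaces (separatrices) rather than first integrals, whereas here all levels are invariant and the slices are honest first integrals. What you gain is a self-contained, dimension-free proof (no reduction to $n=2$, no CLN, no holonomy dynamics) that exhibits the first integral explicitly; what the paper's heavier route buys is structural information reused elsewhere — non-dicriticality, the center-type holonomy dichotomy, and the logarithmic-with-real-residues model — which feeds the example following the theorem and parallels the pencil analysis in Theorem \ref{teo-main-local}. Two small polish points: the sentence ``since $F$ is real-valued one has $\overline{\partial F}=\overline{\partial F}$'' is a tautological typo (you mean $\bar{\partial}F=\overline{\partial F}$, so $\partial F\equiv 0$ forces $dF\equiv 0$); and in the degeneracy argument one should note that constancy of $F$ on one component of the complement of $\{h=0\}$ already forces global constancy, by the identity principle applied to $g-ch$ on a connected neighborhood.
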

The proof of this Theorem \ref{teo-tangent-to-levels} is carried out in Sect. \ref{section-meromorphic-first}.
Let us describe it briefly.
First,
by  taking two-dimensional transversal sections, we can suppose that $n=2$. Denoting by $\G$ the holomorphic foliation in the theorem's statement,
if its   real meromorphic first integral has a fiber of codimension one
 accumulating to $0 \in \C^{2}$, we  have a real analytic
 Levi-flat hypersurface  tangent to $\G$ and the conclusion  comes   immediately from
  the aforementioned Cerveau-Lins Neto's theorem.
On the other hand, if such a fiber does not exist, we can conclude that $\G$ is a non-dicritical foliation
whose  leaves accumulating to $0 \in \C^{2}$ are   separatrices.    A dynamical study of the (virtual) holonomy of $\G$ then allows
us to construct   a holomorphic first integral.

The main result of this article --- for which Theorem \ref{teo-tangent-to-levels} is part of the proof --- provides
  a complete characterization  of local Levi-flat foliations with holomorphic Levi foliations:
\begin{maintheorem}
\label{teo-main-local}
Let $\F$ be a germ of real analytic Levi-flat foliation at $(\C^{n},0)$, $n \geq 2$, induced by
a real analytic $1-$form $\omega$.
If the Levi foliation $\LL$ is holomorphic, then at least one of the
following two possibilities occurs:
\begin{enumerate}[label=(\alph*)]
\item There exists a closed meromorphic $1-$form $\tau$
such that $ \omega  =  h |\psi|^{2} \re( \tau)$, where $\psi$ is a
holomorphic equation for the   polar set of $\tau$ and
$h$ is a germ of real analytic function with real values such that $h(0) \neq 0$;
 in this case, $\LL$ is induced by $\tau$.
\item There are a meromorphic function $\rho$ and a real meromorphic function $\kappa$, with $\kappa/ \bar{\kappa}$ constant
along the leaves of $\LL$,   such that
$ \omega  =   \re(\kappa d \rho)$; in this case, $\rho$ is a meromorphic first integral for $\LL$.
\end{enumerate}
\end{maintheorem}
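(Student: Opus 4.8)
The plan is to translate the hypotheses into a single differential identity for a defining form of $\LL$, and then read off the dichotomy from the transverse dynamics. First I would fix a germ of integrable holomorphic $1$-form $\eta$ defining $\LL$, with $d\eta = \theta\wedge\eta$ for a holomorphic $1$-form $\theta$ (the existence of $\theta$ follows from integrability by holomorphic de Rham division, since $\sing(\LL)$ has codimension $\geq 2$). Because the Levi distribution of $\ker\omega$ is exactly the complex hyperplane field $\ker\eta$, the $(1,0)$-part of $\omega$ is a multiple of $\eta$; thus I may write $\omega = \mu + \bar\mu$ with $\mu = g\,\eta$, where $g$ is a germ of real analytic (complex-valued) function with $g(0)\neq 0$ off $\sing(\LL)$. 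Writing $\Omega = d\log g + \theta$, a direct computation using $d\eta = \theta\wedge\eta$ gives
$$\omega\wedge d\omega = \mu\wedge\bar\mu\wedge(\Omega - \bar\Omega),$$
so the Frobenius condition $\omega\wedge d\omega = 0$ forces $\Omega - \bar\Omega$ into the span of $\mu,\bar\mu$. Comparing $(1,0)$-parts yields the fundamental identity
$$\theta \equiv -\,\dr\log(g/\bar g)\ (\mathrm{mod}\ \eta). \qquad (\star)$$

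Set $u = g/\bar g$, a unimodular real analytic function. Restricting $(\star)$ to a leaf $L$ of $\LL$ (where $\eta|_{L}=0$) shows that $\theta|_{L}$ is the $(1,0)$-part of an exact imaginary form; equivalently $\arg g$ is pluriharmonic along the leaves of $\LL$, and the periods of $\theta$ over loops contained in leaves are all real. This is the precise imprint of Levi-flatness on $\LL$: the holonomy multipliers of $\LL$ are positive reals, and the holonomy pseudogroup preserves the real analytic trace that $\F$ cuts on a transversal. Following the scheme of Theorem \ref{teo-tangent-to-levels}, I would pass to a generic two-dimensional section to turn this into a genuine one-dimensional dynamical problem, solve it there, and afterwards propagate the resulting normal forms back to $(\C^{n},0)$ by the identity principle.

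The dichotomy is then produced by analyzing this holonomy. In the first case, the transverse structure is rigid of affine type: $u$ has the form $\bar\psi/\psi$ for a germ of holomorphic function $\psi$, whose zero set is the polar set of $\tau := \eta/\psi$. Here $(\star)$ reads $\theta \equiv d\psi/\psi\ (\mathrm{mod}\ \eta)$, which is exactly the condition $d\tau = (\theta - d\psi/\psi)\wedge\tau = 0$; hence $\tau$ is a closed meromorphic $1$-form defining $\LL$, and substituting $g = \tfrac12 h\bar\psi$ into $\omega = \mu+\bar\mu$ gives $\omega = h\,|\psi|^{2}\re(\tau)$ with $h$ real analytic, $h(0)\neq 0$. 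This is conclusion (a). In the remaining case, I would show that $\F$ possesses a first integral which is a real meromorphic function with real values; since the leaves of $\LL$ lie in its level sets, $\LL$ is tangent to those levels and Theorem \ref{teo-tangent-to-levels} supplies a meromorphic first integral $\rho$ for $\LL$. Taking $\eta = d\rho$, identity $(\star)$ forces $u = g/\bar g$ to be constant along the leaves of $\LL$, and writing $\kappa = 2g$ gives $\omega = \re(\kappa\,d\rho)$ with $\kappa/\bar\kappa$ leafwise constant. This is conclusion (b).

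The main obstacle is the dynamical step that makes the dichotomy exhaustive: proving that a finitely generated group of germs of holomorphic diffeomorphisms of $(\C,0)$ which has real multipliers and preserves a germ of real analytic foliation must be either analytically linearizable onto a group of real homotheties — the affine case leading to the closed meromorphic form — or finite and periodic enough to admit a common meromorphic first integral, which is the situation where Theorem \ref{teo-tangent-to-levels} applies. Controlling this holonomy, ruling out intermediate resonant or non-linearizable behaviour, and guaranteeing that the objects one builds ($\psi$ in case (a), the real meromorphic first integral of $\F$ in case (b)) are genuinely meromorphic rather than merely formal or Liouvillean, is where the real work lies; the rest is the bookkeeping of $(\star)$ and the extension from a transversal back to $(\C^{n},0)$.
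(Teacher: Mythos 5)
Your identity $(\star)$ is correct (and is, in effect, a decomplexified shadow of the mechanism the paper actually uses), but the proposal has two gaps, one minor and one fatal. The minor one: you cannot take $\theta$ \emph{holomorphic} with $d\eta=\theta\wedge\eta$ by de Rham--Saito division when $\sing(\LL)$ merely has codimension $\geq 2$; division of the $2$-form $d\eta$ by $\eta$ requires codimension $\geq 3$ (already $\eta=z_{1}dz_{2}-z_{2}dz_{1}$ gives a counterexample), so in general $\theta$ is only meromorphic. This is harmless in itself, since the argument can be run with a meromorphic $\theta$, but the justification as written is wrong.

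The fatal gap is the dichotomy itself, which you defer to a ``dynamical step'' and explicitly acknowledge as unproven --- yet that step \emph{is} the theorem. Concretely: (i) the statement you would need (a finitely generated subgroup of $\diff(\C,0)$ with real multipliers, preserving a germ of real analytic foliation, is either linearizable to real homotheties or finite) is neither proved nor obviously adequate. Unlike in the proof of Theorem \ref{teo-tangent-to-levels}, the trace of $\F$ on a transversal to $\LL$ need not be of center type --- for the model $\F_{0}$ the trace is the foliation by lines $\re z = c$ --- so Lemma \ref{lem-dynamics} does not apply; parabolic non-periodic elements and non-linearizable resonant elements with real multiplier are not excluded by your constraints; and if $\LL$ is dicritical there may be no distinguished separatrix along which to read a single holonomy group at all. (ii) Even granting such a dichotomy, the production of $\psi$ in case (a) (your claim $g/\bar g=\bar\psi/\psi$) and of a real-valued real meromorphic first integral of $\F$ in case (b) is asserted, not constructed. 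The paper closes exactly these gaps by a different route: it complexifies, observes that $\eta_{\CC}$ and its mirror $\eta_{\CC}^{*}$ generate a pencil of integrable $1$-forms at $(\C^{2n},0)$ containing $\omega_{\CC}$, with axis $\LL\times\LL^{*}$, and invokes the pencil theorem (Theorem \ref{pencil-foliations}, after Cerveau): either the pencil admits a \emph{closed} meromorphic $\theta$ with $d\varpi=\theta\wedge\varpi$ for all $\varpi$ in the pencil, or the axis is tangent to the levels of a meromorphic function. The first branch, combined with $(*)$-symmetry of $\theta$ and the structure theorem for closed meromorphic $1$-forms, is what produces $\psi$, $h$ and $\tau$ in case (a); the second branch, after $(*)$-symmetrization of the first integral, yields a first integral in $\cl{Q}_{n\R}$ for $\LL$, and only then does Theorem \ref{teo-tangent-to-levels} give $\rho$ for case (b). Without a substitute for the pencil theorem, your outline does not yield an exhaustive case division, so as it stands it is a plan rather than a proof.
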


We give an outline of the proof of Theorem \ref{teo-main-local}, which  will be detailed  in Sect. \ref{section-tangent-to-Levi-flat}.
Our main tools are the complexification   of analytic objects in $(\C^{n},0)$ to $(\C^{n} \times \C^{n*},0)$, where
$\C^{n*}$ is a copy of $\C^{n}$ with the opposite complex structure defined by the complex conjugation, and the associated mirroring or $(*)$-operator, which acts  on analytic objects in $(\C^{n} \times \C^{n*},0)$ (see
Sect. \ref{section-preliminaries}).
Since   the Levi foliation    $\LL$ is holomorphic, it turns out  that $\omega_{\CC}$, the complexification of $\omega$, belongs to a pencil of integrable holomorphic  $1-$forms (see Sect. \ref{section-pencil}). This geometric fact lies in the core of our proof, as long as we can   apply a local version of
Cerveau's classification of pencil of integrable $1-$forms \cite{cerveau2002}
(Proposition \ref{pencil-foliations}). An analysis   exploring   symmetries
under the $(*)$-operator  then  leads to the models proposed in
the assertion of our theorem.
Finally, in Sect. \ref{section-algebraic},   our local results are applied to the
classification of algebraic Levi-flat foliations on the complex projective space $\Pe^{n} = \Pe^{n}_{\C}$.
This is stated in Theorem \ref{teo-main-global}.

The authors are grateful to H. Reis for a comment that was essential in the development of
the arguments in the proof of Theorem \ref{teo-tangent-to-levels}.

\section{Foliations, Mirroring and complexification}
\label{section-preliminaries}

\subsection{Basic notation}

Consider   coordinates $z = (z_{1},\ldots,z_{n})$ in $\mathbb{C}^{n}$, where $z_{j} = x_{j} + i y_{j}$,
and the complex conjugation $\bar{z} = (\bar{z}_{1},\ldots,\bar{z}_{n})$, where $\bar{z}_{j} = x_{j} - i y_{j}$ and $i = \sqrt{-1}$.
We will employ the standard multi-index  notation.
For instance,   if $\mu = (\mu_{1},\ldots,\mu_{n}) \in \Z_{\geq 0}^{n}$, then $z^{\mu} = z_{1}^{\mu_{1}} \cdots z_{n}^{\mu_{n}}$
and $\bar{z}^{\mu} = \bar{z}_{1}^{\mu_{1}} \cdots \bar{z}_{n}^{\mu_{n}}$. Also,
if $I=\{i_1 < \ldots < i_p\} \subset \{1,\ldots,n\}$, then $|I|=p$,
$dz_I=dz_{i_1}\wedge \cdots \wedge dz_{i_p}$ and $d\bar{z}_I=d\bar{z}_{i_1}\wedge \cdots \wedge d\bar{z}_{i_p}$.

 We  fix the following notation
for rings of germs at   $(\C^{n},0)$:
\begin{itemize}
\item $\mathcal{O}_{n} =  \C\{z_{1},\ldots,z_{n}\}$ is the ring     of \emph{holomorphic functions};
\item $\A_{n} = \C\{z_{1},\ldots,z_{n},\bar{z}_{1},\ldots,\bar{z}_{n}\} = \C\{x_{1},y_{1},\ldots,x_{n},y_{n}\}$ is the ring   of \emph{real analytic functions} (with complex values);
\item $\A_{n \R} \subset \A_{n}$ is the ring of   \emph{real analytic functions with real values}.
\end{itemize}
To these rings, in the above order, we associate   the    fields of fractions:
\begin{itemize}
\item $\mathcal{M}_{n}$  is the field of \emph{meromorphic functions};
\item $\cl{Q}_{n}$ is the field of   \emph{real meromorphic functions} (with complex values);
\item $\cl{Q}_{n \R} \subset \cl{Q}_{n}$ is the field of   \emph{real meromorphic functions with real values}.
\end{itemize}
Note that $\phi \in \A_{n}$   is in $\A_{n \R}$   if and only if $\phi(z) = \xbar{\phi(z)}$ for every sufficiently small $z \in \C$. In terms of the Taylor series $\phi(z)=\sum_{\mu,\nu} a_{\mu\nu} z^{\mu}\bar{z}^{\nu}$, this  is equivalent to $a_{\mu\nu} =
\bar{a}_{\nu \mu}$ for all $\mu, \nu$. We make the following convention: dimensions or codimensions will be real, when  referring to   real objects (foliations, varieties), but will be complex, when the objects in question are complex.

\subsection{Levi-flat foliations}
\label{section-levi}

A germ of singular real analytic foliation $\F$ of real codimension one at $(\C^{n},0)$ is the
object induced by a $1-$form $\omega$ with real analytic coefficients without non-trivial
common factor in  $\A_{n \R}$, which satisfies the   Frobenius integrability condition  $\omega \wedge d \omega = 0$.
Writing
\begin{equation}
\label{omega-def}
\omega = \sum_{j=1}^{n} A_{j} d x_{j} + B_{j} d y_{j},
\end{equation}
where $A_{j}, B_{j} \in \A_{n \R}$ for $j = 1, \ldots, n$,
we have that the \emph{singular set} of $\F$,
\[ \sing(\F):= \sing(\omega) = \bigcap_{j=1}^{n}\{ A_{j}= B_{j} = 0 \}\]
 is a
real analytic set of real codimension at least two.

In complex coordinates $z = (z_{1},\cdots,z_{n})$  in $\C^{n}$, where
$x_{j} = \re(z_{j}) = (z_{j} + \bar{z}_{j})/2$ and
$y_{j} = \im(z_{j}) =  (z_{j} - \bar{z}_{j})/2 i$,
we have
\begin{equation}
\label{eq-omega}  
\omega     =    \sum_{j=1}^{n} \frac{A_{j} - i B_{j}}{2} dz_{j} +
\sum_{j=1}^{n} \frac{A_{j} + i B_{j}}{2} d \bar{z}_{j}
  =   \frac{\eta + \bar{\eta}}{2} =  \re(\eta
 ) .
\end{equation}
The $1-$form
\begin{equation}
\label{eta-def}
 \eta    =  \sum_{j=1}^{n} (A_{j} - i B_{j}) dz_{j}
\end{equation}
defines the (intrinsic)  distribution of complex hyperplanes (outside $\sing(\F)$) associated to $\omega$.
We also set
\begin{equation}
\label{eq-omega-sharp}
 \omega^{\sharp}    = \im (\eta )  = \frac{\eta - \bar{\eta}}{2 i}
  =    \sum_{j=1}^{n} -B_{j} dx_{j} + A_{j} dy_{j}.
 \end{equation}

As explained in the introduction, the foliation $\F$ is \emph{Levi-flat} if
 the distribution of complex hyperplanes
defined  outside $\sing(\F)$  by the $1-$form $\eta$ in \eqref{eta-def} is integrable, inducing the Levi foliation $\LL$   whose leaves are   immersed complex manifolds of codimension one.
 As a real foliation of codimension two, $\LL$ is defined by the Pfaff system
$ \omega = \omega^{\sharp} = 0$, which, by \eqref{eq-omega} and \eqref{eq-omega-sharp}, is equivalent  to the one defined by
$ \eta = \bar{\eta} =  0$ and, still, to the one induced by the real analytic $2-$form
$\eta \wedge \bar{\eta}$.
In this paper we deal with the case where $\LL$ is a holomorphic foliation.
This is can be characterized in the following way:
\begin{lem}
\label{lem-holomorphic-L}
Suppose that $\LL$ is a holomorphic foliation, defined by
an integrable holomorphic $1-$form $\sigma$    at $(\C^{n},0)$,
 with
  coefficients without common factors in  $\cl{O}_{n}$. Then
 there exists $\phi \in \A_{n}$ such that
$ \eta = \phi \sigma$.
\end{lem}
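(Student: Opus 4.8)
The plan is to realize $\eta$ and $\sigma$ as proportional $(1,0)$-forms with a real analytic proportionality factor. Write $\eta=\sum_{j=1}^{n}c_{j}\,dz_{j}$ with $c_{j}=A_{j}-iB_{j}\in\A_{n}$, and $\sigma=\sum_{j=1}^{n}a_{j}\,dz_{j}$ with $a_{j}\in\OO_{n}$ having no common factor.

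First I would establish pointwise proportionality on the regular part. Away from $\sing(\F)\cup\sing(\sigma)$ the Levi foliation $\LL$ is a smooth complex foliation, and at each such point $p$ the tangent space to its leaf is the complex hyperplane $\cl{L}_{p}\subset T^{1,0}_{p}\C^{n}$. By construction $\cl{L}_{p}$ is the kernel of $\eta$ as a $(1,0)$-covector, and since $\LL=\LL(\sigma)$ it is also the kernel of $\sigma$. Because the space of $(1,0)$-covectors annihilating a fixed complex hyperplane is one-dimensional, $\eta$ and $\sigma$ are proportional at $p$. Hence the $2$-form $\eta\wedge\sigma$ vanishes on a dense open set, and therefore identically; reading off its coefficients gives the relations
\[
c_{j}a_{k}=c_{k}a_{j},\qquad 1\le j,k\le n .
\]

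Next I would produce the factor $\phi$ by a divisibility argument. I would use that $\A_{n}\cong\C\{z_{1},\ldots,z_{n},w_{1},\ldots,w_{n}\}$, obtained by setting $w_{j}=\bar{z}_{j}$ and treating the $w_{j}$ as independent coordinates (the complexification to $(\C^{n}\times\C^{n*},0)$ described above); this is a UFD, and under the identification each $a_{j}$ lies in the subring $\OO_{n}=\C\{z\}$. The crucial observation is that $a_{1},\ldots,a_{n}$, coprime in $\OO_{n}$, remain coprime in $\A_{n}$: a common irreducible factor would cut out a hypersurface of complex codimension one inside the common zero locus $\{a_{1}=\cdots=a_{n}=0\}=\sing(\sigma)\times\C^{n}_{w}$, which has complex codimension at least two, a contradiction. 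Granting this, fix $j_{0}$ with $a_{j_{0}}\neq 0$ and let $d=\gcd(c_{j_{0}},a_{j_{0}})$, writing $a_{j_{0}}=d\,a'$ and $c_{j_{0}}=d\,c'$ with $\gcd(a',c')=1$. The relations $c_{k}a_{j_{0}}=c_{j_{0}}a_{k}$ then give $c_{k}a'=c'a_{k}$, so $a'\mid a_{k}$ for all $k$; hence $a'$ divides $\gcd(a_{1},\ldots,a_{n})$ and is a unit. Thus $a_{j_{0}}\mid c_{j_{0}}$ in $\A_{n}$, and putting $\phi=c_{j_{0}}/a_{j_{0}}\in\A_{n}$ the displayed relations yield $c_{k}=\phi\,a_{k}$ for every $k$, that is $\eta=\phi\,\sigma$.

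I expect the main obstacle to be the second step — namely guaranteeing that $\phi$ is genuinely real analytic rather than merely real meromorphic. This is precisely the content of the coprimality of the $a_{j}$ in the enlarged ring $\A_{n}$: once that is secured, the syzygy $c_{j}a_{k}=c_{k}a_{j}$ forces the divisibility $a_{j_{0}}\mid c_{j_{0}}$ and cancels the apparent poles of $c_{j_{0}}/a_{j_{0}}$. By contrast, the proportionality in the first step is geometrically immediate, requiring only the linear-algebra fact about annihilators of a complex hyperplane together with the coincidence $\LL=\LL(\sigma)$.
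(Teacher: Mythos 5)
Your proof is correct, and its skeleton matches the paper's: the opening step --- pointwise proportionality of $\eta$ and $\sigma$ off $\sing(\F)\cup\sing(\sigma)$, hence $\eta\wedge\sigma\equiv 0$ by density and analyticity --- is exactly how the paper begins. Where you genuinely diverge is in how the divisibility is secured. The paper defines $\phi$ as $\varepsilon_{j}/\alpha_{j}$ on $\{\alpha_{j}\neq 0\}$ and asserts that, since $\phi$ is a real meromorphic function which is real analytic off the codimension-two complex analytic set $\sing(\sigma)$, each $\alpha_{j}$ must divide $\varepsilon_{j}$ in $\A_{n}$; this is an extension-of-meromorphic-functions argument (a nonempty polar set would be a hypersurface, yet it is confined to a codimension-two set), stated quite tersely. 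You instead work in the UFD $\A_{n}\cong\cl{O}_{2n}$, prove that coprimality of the $a_{j}$ in $\cl{O}_{n}$ persists in $\A_{n}$ via the dimension count on the common zero locus $\sing(\sigma)\times\C^{n}_{w}$, and then run the gcd/Euclid argument on the syzygies $c_{j}a_{k}=c_{k}a_{j}$. Both routes ultimately hinge on the same fact --- a codimension-one locus cannot sit inside $\sing(\sigma)\times\C^{n}_{w}$ --- but your algebraic version makes explicit the coprimality-persistence lemma that the paper's ``this is possible if and only if'' silently relies on, and it never has to discuss extension of the quotient across the singular set. In short: same strategy, but your handling of the key step is more self-contained and rigorous, at the cost of a little more machinery (the UFD structure of $\A_{n}$ through complexification), whereas the paper trades that detail for brevity.
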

\begin{proof}
Write $\eta = \sum_{i=1}^{n} \varepsilon_{j} dz_{j}$,
where
 $\varepsilon_{j} \in \cl{A}_{n}$,
and
$\sigma = \sum_{i=1}^{n} \alpha_{j} dz_{j}$, where
  $\alpha_{j} \in \cl{O}_{n}$.
In a small neighborhood  of $0 \in \C^{n}$, for $z$ outside $\sing(\eta) \cup \sing(\sigma)$,
the equations
 $\eta(z) = 0$ and $\sigma(z) = 0$ define same hyperplane, so that
 there exists $\phi(z) \in \C^{*}$ such that $\eta(z) = \phi(z) \sigma(z)$. For each $j$ such that $\alpha_{j}(z) \neq 0$, we have that $\phi(z) = \varepsilon_{j}(z)/\alpha_{j}(z)$, so that $\phi$ extends to
 a neighborhood of $0 \in \C^{n}$ as a real meromorphic function that is real analytic outside
 $\sing(\sigma)$,  a complex analytic set of codimension two. This is possible, if and only if,
 $\alpha_{j}$ divides $\varepsilon_{j}$ in $\A_{n}$, whenever $\alpha_{j} \neq 0$. Consequently, $\phi$ is real analytic
 in a neighborhood of $0 \in \C^{n}$.
\end{proof}

\subsection{Mirroring}

Let $\mathbb{C}^{n*} \simeq \C^{n}$ be the  space obtained by endowing $\C^{n}$ with the   opposite complex structure, induced by the complex conjugation, where we   consider    complex coordinates $w = (w_{1},\ldots,w_{n}) = \bar{z}$, with $w_{j} = \bar{z}_{j} = x_{j} - iy_{j}$.   The conjugation map $z=x+iy \mapsto x-iy = w$ defines a biholomorphism between
 $\mathbb{C}^{n}$ and $\mathbb{C}^{n*}$ that we call \textit{mirroring}. We   define the \emph{$(*)$-operator},  taking sets, functions or differential forms   in $\C^{n}$ to
 their \emph{mirrors} in $\mathbb{C}^{n*}$,
 in the following way:
\begin{itemize}
\item If $\gamma \subset \C^{n}$ then
     $\gamma^{*} =   \{z ; \bar{z}\in \gamma\}\subset \mathbb{C}^{n*}.$
\item If $\phi: \gamma \subset \mathbb{C}^{n} \to \C$ is a function then
\[ \fun{\phi^{*}}{\gamma^{*} \subset \mathbb{C}^{n*}}{\C}{w}{\xxbar{\phi(\xbar{w})}
 .}\]
\item For a differential $p-$form $\varpi =\sum_{I,J}\phi_{I,J}(z)dz_I \wedge d\bar{z}_{J}$,
with $|I|+|J| = p$, we set
$$ \varpi^{*} =\sum_{I,J}\phi_{I,J}^{*}(w)dw_I \wedge d\bar{w}_{J}.$$
 \end{itemize}
Note that if   $\phi \in \A_{n}$   has Taylor series
$\phi(z)=\sum_{\mu,\nu}a_{\mu\nu}z^{\mu}\bar{z}^{\nu}$, then
\begin{equation}
\label{mirror-function}
\phi^{*}(w)= \xxbar{ \displaystyle \sum_{\mu,\nu}a_{\mu\nu}\bar{w}^{\mu}w^{\nu}} =  \sum_{\mu,\nu}\bar{a}_{\mu\nu}w^{\mu}\bar{w}^{\nu}.  
\end{equation}
Therefore, mirroring preserves the class of analyticity, real or complex.
As a consequence,   $\gamma \subset \mathbb{C}^{n}$ is  a real or complex analytic subvariety if and only if the same holds for $\gamma^{*} \subset \mathbb{C}^{n*}$.
Of our particular interest is the mirroring of foliations.
 If $\G$ is a holomorphic foliation of codimension  $p$  at $(\mathbb{C}^{n},0)$,
defined by a holomorphic $p-$form $\varpi$ --- that is integrable and locally decomposable
  outside the singular set --- then   $\G^{*}$ is the foliation
  defined by  $\varpi^*$. It is straightforward to check that the leaves of $\F^{*}$ are obtained
by   the mirroring of  those of $\mathcal{F}$.

The $(*)$-operator can be defined in a broader context,  acting in objects in $\C^{n} \times \C^{n*} \simeq \C^{2n}$. We have:
\begin{itemize}
\item If $\Gamma \subset \C^{n} \times \C^{n*}$, then $\Gamma^{*}  = \{(z,w); (\bar{w},\bar{z})  \in \Gamma \}$.
\item  $\Phi: \Gamma \subset \C^{n} \times \C^{n*} \to \C$ is a function, then
\[\fun{\Phi^{*}}{\Gamma^{*}}{\C}{(z,w)}{\xxbar{\Phi(\bar{w}, \bar{z})} \ .}\]
\item For a differential $p-$form
$\varpi=\sum_{I,I',J,J'}\phi_{I,I',J,J'}(z,w)dz_I \wedge d\bar{z}_{I'} \wedge dw_{J} \wedge d\bar{w}_{J'},$
where $|I| + |I'| + |J| + |J'|  = p$,
we define
$$\varpi^*=\sum_{I,I',J,J'}\phi^{*}_{I,I',J,J'}(z,w)dw_I \wedge d\bar{w}_{I'} \wedge dz_{J} \wedge d\bar{z}_{J'}.$$
\end{itemize}
Note in particular that, if
  $\Gamma = \gamma \times \{w\}$, with $\gamma \subset \C^{n}$ and $w \in \C^{n*}$, then $\Gamma^{*} = \{\bar{w}\} \times \gamma^{*}$.
For  a germ of holomorphic function $\Phi$  at $(\C^{n} \times \C^{n*},0)$ with Taylor series
$\Phi(z,w) = \sum_{\mu,\nu}a_{\mu\nu}z^{\mu} w^{\nu}$, we have
\begin{equation}
\label{eq-star-operator}
  \Phi^{*}(z,w) =
 \xxbar{\sum_{\mu,\nu}a_{\mu\nu}\bar{w}^{\mu} \bar{z}^{\nu} } =
  \sum_{\mu,\nu} \bar{a}_{\mu\nu} z^{\nu} w^{\mu}.
\end{equation}

\subsection{Complexification}

We work in $\mathbb{C}^{n} \times \mathbb{C}^{n*} \simeq \mathbb{C}^{2n}$ with
coordinates $(z,w)$. We assume henceforth that $\C^{2n}$ is endowed with this decomposition, in such a way that any function in $\cl{O}_{2n}$ or $\cl{M}_{2n}$ is intrinsically a function  in the coordinates $(z,w)$.
Let $\phi \in \A_{n}$ be a germ of real analytic function having Taylor   series
\begin{equation}
\label{eq-taylor-phi}
\phi(z,\bar{z})=\sum_{\mu,\nu} a_{\mu\nu} z^{\mu}\bar{z}^{\nu}.
\end{equation}
The \emph{complexification} of $\phi$ is the germ  of holomorphic function
$ \phi_{ \CC}  \in
\cl{O}_{2n}$
 whose Taylor series is
\begin{equation}
\label{complex-function}
\phi_{\CC}(z,w)=\sum_{\mu,\nu}a_{\mu\nu}z^{\mu} w^{\nu}.
\end{equation}
The complexification of a germ of  real analytic $p$-form  $\varpi=\sum_{I,J}\phi_{I,J}dz_I \wedge d\bar{z}_{J}$ at
$(\mathbb{C}^{n},0)$ is
the germ of holomorphic $p$-form at
$(\mathbb{C}^{2n},0)$ with expression
\begin{equation}
\label{complex-form}
\varpi_{\CC}=\sum_{I,J}\left(\phi_{I,J}\right)_{\CC}dz_I \wedge d w_{J}.
\end{equation}
This is evidently  a two-way process: the \emph{decomplexification} of a germ of holomorphic function $F \in \cl{O}_{2n}$ is the unique   $\phi \in \cl{A}_{n}$ such that $\phi_{\CC} = F$.
The \emph{decomplexification} of a holomorphic $p-$form $(\mathbb{C}^{2n},0)$ is done in an obvious way.
The ideas of complexification and decomplexification can be canonically extended
  to   meromorphic    functions and differential forms.

If $\phi \in \A_{n}$ is as in \eqref{eq-taylor-phi}, we have
\[\phi^{*}(w,\bar{w}) = \sum_{\mu,\nu}\bar{a}_{\mu\nu}w^{\mu} \bar{w}^{\nu},\]
which gives, by the correspondence $w = \bar{z}$,
\begin{equation}
\label{complex-function-star}
(\phi^{*})_{\CC}(z,w) =  \sum_{\mu,\nu}\bar{a}_{\mu\nu} z^{\nu} w^{\mu} .
\end{equation}
Comparing this with  \eqref{eq-star-operator} and   \eqref{complex-function}, we get the commutative property $\left(\phi^{*}\right)_{\CC} = \left(\phi_{\CC}\right)^{*}$, allowing us
to  denote both expressions by $\phi_{\CC}^{*}$. Similar remarks apply to successive   mirror and  complexification of
a real analytic $1-$form $\varpi$, making the notation $\varpi_{\CC}^{*}$   unambiguous.

We say that a function $F$ in $\cl{O}_{2n}$ is $(*)$-\emph{symmetric} or \emph{mirror symmetric} if
 $F^{*} = F$.
Observe that, for some  $\phi \in \cl{A}_{n}$,
the equality $\phi_{\CC} = \phi_{\CC}^{*}$ holds if and only if $\phi \in \A_{n \R}$.
Indeed, comparing \eqref{complex-function} and \eqref{complex-function-star}, we find
that the mirror invariance is equivalent to $a_{\mu\nu} = \bar{a}_{\nu\mu}$ for all indices $\mu, \nu$,
giving the conclusion. More generally, $(*)$-symmetric functions in $\cl{O}_{2n}$ are precisely those that decomplexify as   real analytic functions with real values.
 The following fact is straightforward: if $\varphi \in \cl{O}_{1}$ is a function
whose Taylor series has real coefficients and $F \in \cl{O}_{2n}$, then $(\varphi \circ F)^{*} = \varphi \circ F^{*}$.
In particular, $\varphi \circ F$ is $(*)$-symmetric if $F$ is. This fact can be applied,
 for instance, to the function  $\exp(F)$ and, when  $F$ is a unity such that $\re(F(0)) > 0$,  to $\sqrt{F}$ (taking the principal branch of the square root).
The notion of $(*)$-symmetry can be extended in an obvious way to meromorphic functions
at    $(\C^{2n},0)$.
 \begin{prop}
 \label{prop-meromorhpic-function}
   We have the following facts:
\par  (a)  A $(*)$-symmetric germ of meromorphic function in $\cl{M}_{2n}$
 can be written as a quotient of $(*)$-symmetric
holomorphic functions in $\cl{O}_{2n}$.
  \par  (b) A real meromorphic function $f \in \cl{Q}_{n}$ is in $\cl{Q}_{n \R}$ if and only if $f = \bar{f}$.
 \end{prop}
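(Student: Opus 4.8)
The plan is to treat both parts as manifestations of a single algebraic phenomenon: the $(*)$-operator is an involutive, conjugate-linear ring automorphism of the integral domain $\mathcal{O}_{2n}$, and the conjugation $\phi \mapsto \bar\phi$ is the corresponding involution of $\mathcal{A}_n$, the two being interchanged by complexification. The one computation I would record at the outset is that $*$ is multiplicative, i.e. $(FG)^* = F^* G^*$ for $F, G \in \mathcal{O}_{2n}$; this follows by comparing Taylor coefficients via \eqref{eq-star-operator} (the index swap $\mu \leftrightarrow \nu$ together with the conjugation of coefficients is compatible with the Cauchy product), and together with additivity and $(F^*)^* = F$ it makes $*$ an involutive automorphism. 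Likewise $\overline{\phi\psi} = \bar\phi\,\bar\psi$ pointwise, so conjugation is an involutive automorphism of $\mathcal{A}_n$ whose fixed ring is exactly $\mathcal{A}_{n\R}$.

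For part (a), I would start from an arbitrary representation $F = g/h$ with $g, h \in \mathcal{O}_{2n}$ and $h \not\equiv 0$, which exists because $\mathcal{M}_{2n}$ is the field of fractions of $\mathcal{O}_{2n}$. The hypothesis $F^* = F$ reads $g^*/h^* = g/h$, and cross-multiplying in the domain $\mathcal{O}_{2n}$ yields the key identity $g\,h^* = g^*\,h$. I then clear the denominator by its mirror, writing
\[ F = \frac{g}{h} = \frac{g\,h^*}{h\,h^*}. \]
The denominator is $(*)$-symmetric since $(h\,h^*)^* = h^*\,(h^*)^* = h^*\,h = h\,h^*$, and it is nonzero because $\mathcal{O}_{2n}$ is a domain; the numerator satisfies $(g\,h^*)^* = g^*\,h = g\,h^*$ by the identity above, so it too is $(*)$-symmetric. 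This exhibits $F$ as a quotient of $(*)$-symmetric holomorphic germs, proving (a). I deliberately do not insist on a reduced fraction, which is precisely what keeps the argument free of any Hilbert--90-type splitting of units.

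Part (b) admits the same ``multiply by the involute'' trick, now with conjugation in place of $*$. The implication ($\Rightarrow$) is immediate: if $f = p/q$ with $p, q \in \mathcal{A}_{n\R}$, then $\bar f = \bar p/\bar q = p/q = f$. For ($\Leftarrow$), write $f = g/h$ with $g, h \in \mathcal{A}_n$; the hypothesis $f = \bar f$ gives $g\,\bar h = \bar g\, h$, whence
\[ f = \frac{g}{h} = \frac{g\,\bar h}{h\,\bar h}. \]
Here $h\,\bar h = |h|^2$ and $g\,\bar h$ are both fixed by conjugation, hence lie in $\mathcal{A}_{n\R}$, so $f \in \mathcal{Q}_{n\R}$. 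Alternatively, I can deduce (b) from (a) through complexification: a coefficient comparison of \eqref{complex-function} and \eqref{complex-function-star} shows $(\bar\phi)_{\CC} = (\phi_{\CC})^*$, so conjugation on $\mathcal{Q}_n$ corresponds exactly to the $(*)$-operator on $\mathcal{M}_{2n}$; then $f = \bar f$ is equivalent to $f_{\CC}$ being $(*)$-symmetric, and decomplexifying the symmetric numerator and denominator produced by (a) gives the desired real-valued representation.

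I expect no serious obstacle: the substance of both parts is the single identity obtained by cross-multiplying the symmetry relation, plus the verification that the mirrored (resp. conjugated) denominator is nonzero and symmetric. The only points demanding care are the preliminary check that $*$ is genuinely multiplicative --- so that factoring and cross-multiplication are legitimate --- and, if one routes (b) through (a), the bookkeeping establishing the dictionary $(\bar\phi)_{\CC} = (\phi_{\CC})^*$ and its extension from $\mathcal{A}_n$ to $\mathcal{Q}_n$. Both are routine consequences of the coefficient formulas already recorded in the text.
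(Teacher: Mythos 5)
Your proof is correct, but it takes a genuinely different and more economical route than the paper's. For (a), the paper insists on a reduced fraction $F=G/H$ with $G,H\in\cl{O}_{2n}$ coprime, compares zeros and poles to produce a unity $U$ with $G^{*}=UG$, $H^{*}=UH$ and $UU^{*}=1$, normalizes $U(0)=1$ by a scalar, and then symmetrizes by multiplying $G$ and $H$ by $\sqrt{U}$ --- precisely the Hilbert--90-type splitting of units that you deliberately avoid. Your device of renouncing coprimality and multiplying numerator and denominator by $h^{*}$, so that $F=(gh^{*})/(hh^{*})$ with both entries $(*)$-symmetric by the cross-multiplied identity $gh^{*}=g^{*}h$, bypasses the divisor comparison, the normalization of the unity and the square root altogether; the same trick with complex conjugation gives (b) directly, whereas the paper deduces (b) by applying (a) to $f_{\CC}$ (which is your alternative route). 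What the paper's longer argument buys is a stronger output: since $\sqrt{U}$ is a unity, the symmetric representatives $\tilde{G}=\sqrt{U}\,G$ and $\tilde{H}=\sqrt{U}\,H$ remain relatively prime, and it is this coprime form that is invoked verbatim in the proof of Theorem \ref{teo-main-local}, Alternative (b), where the paper writes $F=G/H$ with ``relatively prime $(*)$-symmetric functions.'' Your representation is never reduced ($h^{*}$ divides both $gh^{*}$ and $hh^{*}$), so it proves the proposition exactly as stated but not that refinement; fortunately, coprimality is not actually needed in that later application (non-constancy of the decomplexified quotient and the tangency relation $df\wedge\eta\wedge\bar{\eta}=0$ hold for any representation), so nothing downstream breaks, but the discrepancy is worth being aware of.
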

\begin{proof}
Let  $F$ be the meromorphic function as in item (a), written in
 the form $F = G/H$, where $G, H \in \cl{O}_{2n}$ are without common factors.
Since $F^{*} = F$, we have $G^{*}/H^{*} = G/H$. Comparing
  zeroes and poles, we find
a unity $U \in \cl{O}_{2n}$ satisfying $U U^{*} = 1$ such that $G^{*} = U G$
and $H^{*} = U H$. We have $|U(0)|=1$ and, in fact, we can suppose that $U(0) = 1$. Actually, taking $\alpha \in \C^{*}$ such that $\alpha^{2} = U(0)$, we have
\[ (\alpha G)^{*} = 
\frac{  U}{\alpha^{2}}(\alpha G) \qquad \text{and} \qquad  (\alpha H)^{*} = \frac{ U}{\alpha^{2}}(\alpha H).\]
Thus, it suffices to replace $G$ by $\alpha G$ and $H$ by $\alpha H$ in the rational decomposition of $F$.
Now, by putting $\tilde{G} = \sqrt{U} \, G$, we have
\[ \tilde{G}^{*}  = (\sqrt{U})^{*} G^{*} =  \sqrt{U^{*}} \, U G = \sqrt{U} \, G = \tilde{G} .\]
In a similar way, the function $\tilde{H} = \sqrt{U} \, H$ is such that $\tilde{H}^{*} = \tilde{H}$.
Therefore we can write $F = \tilde{G}/\tilde{H}$, where $\tilde{G}, \tilde{H} \in \cl{O}_{2n}$ are $(*)$-symmetric.

Item (b) follows straight by applying (a) to  the complexification $F = f_{\CC}$.

\end{proof}

We   state  the following result for future reference:
\begin{prop}
\label{common-component}
Let $\phi \in \cl{A}_{n}$. Then $\phi_{\CC}$ and $\phi_{\CC}^{*}$ have a common non-trivial
factor in $\cl{O}_{2n}$ if and only if $\phi$ has a non trivial factor that lies in $\cl{A}_{n\R}$.
\end{prop}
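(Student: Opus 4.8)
The plan is to complexify the entire question and settle it inside the unique factorization domain $\cl{O}_{2n}$. First I would use that complexification is a ring isomorphism $\cl{A}_{n}\to\cl{O}_{2n}$ (it merely renames $\bar z$ as $w$, preserving convergence, with decomplexification as its inverse), so that it carries factorizations of $\phi$ in $\cl{A}_{n}$ bijectively to factorizations of $F:=\phi_{\CC}$ in $\cl{O}_{2n}$, sending units to units. By the remarks preceding Proposition \ref{prop-meromorhpic-function}, a factor $g$ of $\phi$ lies in $\cl{A}_{n\R}$ precisely when its complexification $g_{\CC}$ is $(*)$-symmetric, i.e. $g_{\CC}^{*}=g_{\CC}$. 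Since $\phi_{\CC}^{*}=F^{*}$, the statement reduces to the purely factorial assertion in the UFD $\cl{O}_{2n}$: \emph{$F$ and $F^{*}$ admit a common non-unit divisor if and only if $F$ admits a non-unit $(*)$-symmetric divisor.} I would also record at the outset that $(*)$ is an involutive, anti-linear ring automorphism of $\cl{O}_{2n}$, so it permutes units and irreducibles and satisfies, for every $G$, the equivalence $G\mid F^{*}\Leftrightarrow G^{*}\mid F$ (apply $(*)$ and use $F^{**}=F$).

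The ``if'' direction is immediate: if $G$ is a non-unit factor of $F$ with $G^{*}=G$, then writing $F=GH$ and applying $(*)$ gives $F^{*}=G H^{*}$, so $G$ divides both $F$ and $F^{*}$. For the converse, a common non-unit divisor yields, after passing to an irreducible factor, an irreducible $q$ with $q\mid F$ and $q\mid F^{*}$, hence $q\mid F$ and $q^{*}\mid F$ by the equivalence above. I would then split according to the $(*)$-orbit of $q$. If $q$ and $q^{*}$ are non-associate, then $G:=q\,q^{*}$ is $(*)$-symmetric and, as a product of two non-associate irreducibles each dividing $F$, divides $F$, producing the required symmetric factor. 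If instead $q^{*}=v\,q$ for a unit $v$, the product $q\,q^{*}\sim q^{2}$ need not divide $F$, so I would instead rectify $q$ into a symmetric \emph{associate}, exactly via the square-root normalization used in Proposition \ref{prop-meromorhpic-function}: from $q^{**}=q$ one gets $v\,v^{*}=1$; after scaling $q$ by a suitable constant one may assume $v(0)=1$; and then $\tilde q:=\sqrt{v}\,q$ satisfies $\tilde q^{*}=\sqrt{v^{*}}\,(v\,q)=\sqrt{v}\,q=\tilde q$, using $(\sqrt{v})^{*}=\sqrt{v^{*}}$ and $v^{*}=v^{-1}$. In either case $\tilde q$ (resp. $G$) is a non-unit $(*)$-symmetric divisor of $F$, which decomplexifies to a non-trivial factor of $\phi$ in $\cl{A}_{n\R}$.

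The only genuine obstacle will be the self-associate case $q^{*}\sim q$: there the naive choice $q\,q^{*}$ fails because $q$ may occur only to the first power in $F$, and the real content is that an irreducible generator can always be chosen \emph{exactly} $(*)$-symmetric within its own associate class. This is precisely what the $\sqrt{\,\cdot\,}$ argument of Proposition \ref{prop-meromorhpic-function} supplies, so I would quote that normalization rather than redo it; everything else is bookkeeping in the UFD $\cl{O}_{2n}$ together with the dictionary between $(*)$-symmetry and membership in $\cl{A}_{n\R}$.
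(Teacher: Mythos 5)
Your proof is correct, and at its core it follows the same route as the paper's: extract an irreducible common factor, pair it with its image under the symmetry, and conclude by unique factorization. The paper argues downstairs in $\cl{A}_{n}$, pairing an irreducible factor $g_{1}$ of the decomplexified common factor with its conjugate $\bar{g}_{1}$, while you argue upstairs in $\cl{O}_{2n}$, pairing $q$ with $q^{*}$; since complexification is a ring isomorphism intertwining conjugation with the $(*)$-operator, this is only a change of scenery. Where you genuinely add something is your Case B. The paper's proof passes from ``$g_{1}$ is an irreducible factor not lying in $\cl{A}_{n\R}$'' to ``$\bar{g}_{1}$ also divides $\phi$, hence $g_{1}\bar{g}_{1}$ divides $\phi$'', which tacitly assumes that $g_{1}$ and $\bar{g}_{1}$ are non-associate. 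That assumption is not forced by $g_{1}\notin\cl{A}_{n\R}$: for instance $g_{1}=i x_{1}$ is irreducible, takes non-real values, and satisfies $\bar{g}_{1}=-g_{1}$, so in that situation $g_{1}\bar{g}_{1}=x_{1}^{2}$ need not divide $\phi$ (take $\phi=x_{1}$, whose real factor one finds only by choosing the representative $x_{1}$ rather than $ix_{1}$). The resolution is exactly your square-root normalization: an irreducible whose mirror (resp.\ conjugate) is an associate always admits a $(*)$-symmetric (resp.\ real-valued) associate, and that associate is the required factor. So your write-up reproduces the paper's argument and, in addition, closes the one case the paper leaves implicit; quoting the normalization from the proof of Proposition \ref{prop-meromorhpic-function} is legitimate, since that argument only uses the relation $vv^{*}=1$, a constant rescaling to achieve $v(0)=1$, and the $(*)$-compatibility of the principal branch of the square root, all of which are available in your setting.
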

\begin{proof} On the one hand, if $f \in \cl{A}_{n\R}$ is a factor of $\phi$, then
 $f_{\CC} = f_{\CC}^{*}$ is a factor of both $\phi_{\CC}$ and $\phi_{\CC}^{*}$.
On the other hand, suppose that $G \in \cl{O}_{2n}$ is a non-unity that
divides both $\phi_{\CC}$ and $\phi_{\CC}^{*}$. Then the decomplexification $g \in \cl{A}_{n}$ of $G$
divides both $\phi$ and $\bar{\phi}$ in $\cl{A}_{n}$. That is, there are functions
$\alpha, \beta \in \cl{A}_{n}$ such that
$\phi = g \alpha$ and  $\bar{\phi} = g \beta$.
If $g$ has an irreducible   factor, say $g_{1}$, not lying
in $\cl{A}_{n\R}$, then, writing $g = g_{1} h$  for some $h \in \cl{A}_{n}$, we have
$\phi = g_{1} h \alpha$ and $\bar{\phi} =  g_{1} h \beta$.
Thus
\[ \phi = g_{1} h \alpha = \bar{g}_{1} \bar{h} \bar{\beta}. \]
This shows that $\bar{g}_{1}$ also divides $\phi$. Hence $ g _{1} \bar{g}_{1} = |g _{1}|^{2} $ is a factor
of $\phi$ in $\cl{A}_{n\R}$.
\end{proof}

\section{Pencils of integrable   $1-$forms  }
\label{section-pencil}
Let $\eta_{1}$ and $\eta_{2}$ be germs of integrable holomorphic $1-$forms at $(\C^{n},0)$, $n \geq 3$.
Suppose that they are independent --- meaning that $\eta_{1} \wedge \eta_{2} \neq 0$ --- and that
  their singular sets do not have a common component of codimension one.
 We say that $\eta_{1}$ and $\eta_{2}$ define
a \emph{pencil of integrable $1-$forms} or, shortly, an \emph{integrable pencil}  if, for every $a,b \in \C$, with at least one of them non-zero, the
$1-$form $\eta_{(a,b)} = a \eta_{1} + b\eta_{2}$ is integrable.
Thus, $\eta_{(a,b)}$ defines a holomorphic foliation of codimension one
 $\F_{t}$, where
$t = (a:b) \in \Pe^{1}$.
It may turn out that $\codim_{\C}\, \sing(\eta_{(a,b)}) = 1$, but, since $\sing(\eta_{1})$ and $\sing(\eta_{2})$ do not have a common component of codimension one, this will happen
for only finitely
  many values of $t  = (a:b)$ (see \cite[Lem. 2]{mol2011}).
For these values,  an equation for $\F_{t}$ is obtained by cancelling in $\eta_{(a,b)}$  the common factors of its coefficients. The integrability condition reads
\begin{eqnarray*} 0 = \eta_{(a,b)}\wedge d \eta_{(a,b)} & = & (a \eta_{1} + b \eta_{2}) \wedge d( a\eta_{1} + b \eta_{2}) \\
& = &
a b \left( \eta_{1} \wedge d \eta_{2} + \eta_{2} \wedge d \eta_{1} \right).
\end{eqnarray*}
Thus $\eta_{1}$ and $\eta_{2}$ define an integrable pencil   if and only if
\begin{equation}
\label{eq-pencil-condition}
\eta_{1} \wedge d \eta_{2} + \eta_{2} \wedge d \eta_{1} = 0 .
\end{equation}
This equation will be referred to   as   \emph{pencil condition}. Observe that,   knowing that $\eta_{1}$ and
$\eta_{2}$ are integrable, the pencil condition   is satisfied once
we find a pair of values $a,b \in \C^{*}$ for which   $\eta_{(a,b)}$ is integrable.   We denote the integrable pencil   engendered by $\eta_{1}$ and $\eta_{2}$
  by $\cl{P} = \cl{P}(\eta_{1},\eta_{2})$. Note that if
 $h \in \cl{O}_{n}$ is a unity, then $h \eta_{1}$ and $h \eta_{2}$  also define an integrable pencil,
 denoted by  $h \cl{P} = \cl{P}(h \eta_{1},h \eta_{2})$. Clearly, the  foliations associated to
both pencils are the same.

Given an integrable pencil $\cl{P} = \cl{P}(\eta_{1},\eta_{2})$, there exists a unique meromorphic $1-$form $\theta = \theta_{\mathcal P}$ such that, for
every $\varpi \in \cl{P}$, it holds
\begin{equation}
\label{theta}
  d \varpi = \theta \wedge \varpi.
\end{equation}
The germ of meromorphic $2-$form $d \theta$ is called  \emph{pencil curvature}.
Note that, if we multiply    $\varpi \in \cl{P}$ by a unity $h \in \cl{O}_{n}$, then
\begin{equation}
\label{eq-pencil-curvature}
  d( h \varpi ) = \left( \theta + \frac{dh}{h} \right) \wedge (h  \varpi) ,
\end{equation}
so that
  $\theta_{ h {\mathcal P}} = \theta + dh / h$ and
  the same \emph{curvature} is associated to ${\mathcal P}$ and to $h {\mathcal P}$.
The germ of
holomorphic $2-$form $\eta_{1} \wedge \eta_{2}$  is integrable and satisfies $ \eta_{1} \wedge \eta_{2} \wedge
\varpi = 0$ for every $\varpi \in \cl{P}$. This says that the codimension two holomorphic foliation $\G$
defined by $\eta_{1} \wedge \eta_{2}$ is
\emph{tangent} to all foliations associated to $1-$forms in $\cl{P}$.
The $2-$form $\eta_{1} \wedge \eta_{2}$ --- or the associated codimension two foliation --- is called    \emph{axis} of $\cl{P}$.

The next result will be the main ingredient
in the proof of Theorem \ref{teo-main-local}.   Its proof is contained, without an explicit mention,    in Cerveau's paper \cite{cerveau2002} on the so-called ``Brunella's conjecture'' for   foliations in $\Pe^{3}$.  The arguments therein   adapt  to the local framework.
Below,
 we include a sketch of the proof, which  is carried out in a more thorough way in \cite{cuzzuol2016}.
\begin{teo}
\label{pencil-foliations}
Let $\cl{P}$ be an integrable pencil  at $(\C^{n},0)$, $n \geq 3$.   Then, at
least one of the following conditions is satisfied:
\begin{enumerate}[label=(\alph*)]
\item  There exists a closed meromorphic
$1-$form $\theta$ such that $d \varpi = \theta \wedge \varpi$ for every $1-$form
$\varpi \in \cl{P}$. In particular, if $\theta$ is holomorphic, all foliations in $\cl{P}$ admit  holomorphic  first integrals.
\item   The axis of $\cl{P}$ is tangent to the levels of a non-constant meromorphic function.
\end{enumerate}
\end{teo}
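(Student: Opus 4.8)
The plan is to read off everything from the \emph{pencil curvature} $d\theta$, where $\theta=\theta_{\cl P}$ is the unique meromorphic $1$-form supplied by \eqref{theta}. First I would differentiate the defining relation $d\varpi=\theta\wedge\varpi$. Since
\[ 0=d^{2}\varpi=d(\theta\wedge\varpi)=d\theta\wedge\varpi-\theta\wedge d\varpi=d\theta\wedge\varpi-\theta\wedge(\theta\wedge\varpi)=d\theta\wedge\varpi, \]
we get $d\theta\wedge\varpi=0$ for every $\varpi\in\cl P$; in particular $d\theta\wedge\eta_{1}=d\theta\wedge\eta_{2}=0$. Because $n\geq 3$ and the singular sets of $\eta_{1},\eta_{2}$ have codimension at least two, the de Rham--Saito division lemma applies to each of these equations: the first gives $d\theta=\eta_{1}\wedge\beta$, and feeding this into the second yields $\eta_{1}\wedge\eta_{2}\wedge\beta=0$, hence $\beta=a\eta_{1}+b\eta_{2}$ for meromorphic $a,b$ by division against the decomposable axis $\eta_{1}\wedge\eta_{2}$. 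Therefore
\[ d\theta=g\,\eta_{1}\wedge\eta_{2}\qquad\text{for some meromorphic }g, \]
so the curvature is a meromorphic multiple of the axis, and the whole argument splits according to whether $g$ vanishes identically.

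\textbf{Flat case.} If $g\equiv 0$, then $\theta$ is a closed meromorphic $1$-form satisfying $d\varpi=\theta\wedge\varpi$ for every $\varpi\in\cl P$, which is exactly alternative (a); by the uniqueness in \eqref{theta} this is the only candidate, so (a) holds precisely when the curvature vanishes. For the holomorphic refinement, if this closed $\theta$ is moreover holomorphic I would write $\theta=dh$ locally and observe $d(e^{-h}\varpi)=e^{-h}(d\varpi-\theta\wedge\varpi)=0$; thus $e^{-h}\varpi=dF$ for a holomorphic $F$, a holomorphic first integral of the foliation defined by $\varpi$.

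\textbf{Non-flat case.} The substantial case is $g\not\equiv 0$; here alternative (a) is impossible (the unique $\theta$ is not closed), so I must produce the first integral of (b). I would exploit $d(d\theta)=0$: using $d(\eta_{1}\wedge\eta_{2})=2\,\theta\wedge\eta_{1}\wedge\eta_{2}$, which follows directly from $d\eta_{i}=\theta\wedge\eta_{i}$, closedness becomes $(dg+2g\theta)\wedge\eta_{1}\wedge\eta_{2}=0$, that is $\tfrac{dg}{g}+2\theta\equiv 0\pmod{\langle\eta_{1},\eta_{2}\rangle}$. Together with the relations $d\eta_{i}=\theta\wedge\eta_{i}$, this endows the codimension-two axis $\G$ with a flat transverse structure of Maurer--Cartan ($\mathfrak{sl}_{2}$) type carried by the triple $(\eta_{1},\eta_{2},\theta)$, whose integrability datum is the nonzero, closed, locally decomposable meromorphic $2$-form $d\theta$ defining $\G$. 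The aim is to integrate $d\theta$ to a single meromorphic function.

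The hard part --- and the technical core of Cerveau's argument in \cite{cerveau2002} --- is precisely passing from this transverse structure to a genuine, single-valued meromorphic first integral of $\G$. Concretely, I would analyze the polar divisor of $\theta$ and of $g$ and the residues of the closed form $d\theta$ along its polar components: closedness forces these residues to be closed $1$-forms with constrained periods, and the obstruction to finding a non-constant meromorphic $A$ with $dA\wedge\eta_{1}\wedge\eta_{2}=0$ is exactly the monodromy of this structure. Showing that the \emph{nonvanishing} of the curvature forces the structure to close up --- so that, after a suitable meromorphic integrating factor, $d\theta$ becomes $dA\wedge dB$ and the leaves of $\G$ lie in the levels $\{A=\mathrm{const}\}$ --- is where the real work lies, and it yields alternative (b) with $A$ as the sought first integral. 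I expect this period/monodromy analysis in the non-flat case to be the main obstacle, the remaining steps being formal consequences of the division lemmas and the structure equations above.
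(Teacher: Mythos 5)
Your setup reproduces the paper's argument faithfully up to the key structure equation: differentiating $d\varpi=\theta\wedge\varpi$ to get $d\theta\wedge\eta_{1}=d\theta\wedge\eta_{2}=0$, concluding $d\theta=g\,\eta_{1}\wedge\eta_{2}$ with $g$ meromorphic (the paper obtains this by pointwise collinearity of $d\theta$ with the axis plus meromorphic extension of the factor, rather than by de Rham--Saito division, but the conclusion is identical), and your flat case, including the holomorphic refinement via $e^{-h}\varpi=dF$, is complete and coincides with the paper's Case 1. Likewise, your identity $(dg+2g\theta)\wedge\eta_{1}\wedge\eta_{2}=0$ is exactly the equation the paper derives in its Case 2.

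The genuine gap is that in the non-flat case you stop precisely where alternative (b) has to be produced. Your proposed continuation --- a period/monodromy analysis of the residues of $d\theta$ and an integrating factor turning $d\theta$ into $dA\wedge dB$ --- is never executed, and it is not how the argument you invoke actually proceeds; it is also unclear how periods and monodromy would even be set up for a germ at $(\C^{n},0)$. What the paper (following Cerveau) does instead is purely differential-algebraic. When $g$ is non-constant, one writes
\[ \frac{1}{2}\,\frac{dg}{g}+\theta \;=\; k_{1}\eta_{1}+k_{2}\eta_{2}, \qquad k_{1},k_{2}\in\cl{M}_{n}, \]
and further differentiation of the structure equations shows that $k_{1}^{2}/g$, $k_{2}^{2}/g$ and $k_{1}/k_{2}$ are constant along the leaves of the axis, with at least one of them non-constant: these explicit ratios are the sought first integrals. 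When $g$ is a nonzero constant, your identity degenerates to $\theta\wedge\eta_{1}\wedge\eta_{2}=0$, so $\theta=\mu_{1}\eta_{1}+\mu_{2}\eta_{2}$, whence $d\eta_{1}=-\mu_{2}\,\eta_{1}\wedge\eta_{2}$ and $d\eta_{2}=\mu_{1}\,\eta_{1}\wedge\eta_{2}$; then either $\mu_{2}/\mu_{1}$ is a non-constant meromorphic first integral of the axis, or it is a constant $c$ (or $\mu_{1}=0$), in which case $\eta_{1}+c\,\eta_{2}$ (respectively $\eta_{2}$) is a closed member of the pencil whose holomorphic primitive is the first integral. Without carrying out these constructions, or something equivalent, alternative (b) is asserted rather than proved, so your proposal establishes only half of the theorem.
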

\begin{proof}
Let $\theta = \theta_{\mathcal P}$ as in \eqref{theta}.
We have two cases to consider:

\par \noindent \underline{Case 1.} $d \theta = 0$ (zero curvature). This is the first of the alternatives. In the particular situation
where  $\theta$ is holomorphic, we can write $\theta = d g$ for some $g \in \cl{O}_{n}$. Putting $h = \exp(g)$,
we have that $d h/h = dg$ and thus $ h d \varpi = dh \wedge \varpi $ for every $\varpi \in \cl{P}$, giving
\[d \left( \frac{\varpi }{h} \right) = \frac{1}{h^{2}}( h d\varpi  -  d h \wedge \varpi  ) = 0.\]
Thus, after multiplication by a same unity in $\cl{O}_{n}$, all $1-$forms in the pencil become
  closed. Their integration provide the holomorphic first integrals of the statement.

\par \noindent \underline{Case 2.} $d \theta \neq 0$ (non-zero curvature). This shall give the second alternative. The arguments here are adapted from   \cite[Prop. 2]{cerveau2002}. Taking differentials in $d \varpi = \theta \wedge \varpi$, where $\varpi \in \cl{P}$, we get
$ d \theta \wedge \varpi = 0$. This is true, in particular, for $\varpi = \eta_{1}$ and $\eta_{2}$. Thus, at any point in a neighborhood of $0 \in \C^{n}$ outside
  $\sing(\eta_{1} \wedge \eta_{2})$, the $2-$forms  $d \theta$ and $\eta_{1} \wedge \eta_{2}$ are collinear.
  Thus, outside  $\sing(\eta_{1} \wedge \eta_{2})$, we can find
 a non-zero holomorphic  function $\alpha$  such that
\begin{equation}
\label{alpha}
d \theta = \alpha \eta_{1} \wedge \eta_{2}.
\end{equation}
Comparing coefficients in both sides, we conclude that $\alpha$ extends to a meromorphic function in a neighborhood of $0 \in \C^{n}$.
We have two subcases:

\par  \medskip \underline{Subcase 2.1.} $\alpha$ is constant.
Taking the exterior derivative of \eqref{alpha} and applying
the pencil condition we conclude that
$ d \eta_{1} \wedge \eta_{2} = d \eta_{2} \wedge \eta_{1} = 0$.
Thus, by \eqref{theta},
$ \theta \wedge \eta_{1} \wedge \eta_{2} = 0$.
Hence, there are meromorphic functions $\mu_{1}$ and $\mu_{2}$ such that
\[ \theta = \mu_{1} \eta_{1} + \mu_{2}\eta_{2} .\]
This applied to \eqref{theta} gives
\begin{equation}
\label{eq-d-eta}
d \eta_{1} = -\mu_{2} \eta_{1} \wedge \eta_{2} \qquad \text{and} \qquad d \eta_{2} = \mu_{1} \eta_{1} \wedge \eta_{2}.
\end{equation}
 If $\mu_{1} = 0$, then $d \eta_{2} = 0$ and there exists a non-constant $g \in \cl{O}_{n}$ such that
 $ d g  = \eta_{2}$. In particular, this $g$ is a holomorphic first integral for the axis of $\cl{P}$.
Let us suppose $\mu_{1} \neq 0$.  Write, from \eqref{eq-d-eta},
 $ d \eta_{1} = -  \left( \mu_{2} / \mu_{1} \right) d \eta_{2}$,
whose differentiation  gives
 \[ \mu_{1} d \left( \frac{\mu_{2}}{\mu_{1}}  \right) \wedge \eta_{1} \wedge \eta_{2} = 0.\]
Then   $\mu_{2} / \mu_{1}$ is a meromorphic first integral for the axis of $\cl{P}$, provided it is non-constant.
However, if $\mu_{2} / \mu_{1} = c$ is a constant, then $\varpi = \eta_{1} + c \eta_{2}$ is a closed $1-$form in $\cl{P}$.
Again, there exists a non-constant $g \in \cl{O}_{n}$ such that $d g  = \varpi$, giving, in particular, a holomorphic
first integral for the axis of $\cl{P}$.

\par  \medskip \underline{Subcase 2.2.} $\alpha$ is non-constant. The exterior derivative of equation $\eqref{alpha}$, along with \eqref{theta}, gives
$(d \alpha + 2 \alpha \theta) \wedge  \eta_{1} \wedge \eta_{2} = 0$.
 This implies that there exist    $k_{1}, k_{2} \in \cl{M}_{n}$, such that
\[ \frac{1}{2}  \frac{d \alpha}{\alpha} + \theta = k_{1} \eta_{1} + k_{2} \eta_{2} .\]
The same arguments of  \cite[Prop. 2]{cerveau2002} show that $k_{1}^{2}/\alpha$, $k_{2}^{2}/\alpha$ and $k_{1}/k_{1}$ are constant along the leaves of
the axis of $\cl{P}$, with at least one  non constant.
\end{proof}


\section{Holomorphic foliations with real meromorphic first integrals}
\label{section-meromorphic-first}

In this section we study germs of holomorphic foliations of codimension one at $(\C^{n},0)$
 that are tangent to the levels of  real meromorphic functions with real values. In the language of this paper,
we are considering   functions in $\cl{Q}_{n\R}$   whose levels define Levi-flat foliations.
Our objective is to give a proof for Theorem \ref{teo-tangent-to-levels}.

We start with a consideration on the dynamics of   subgroups of $\diff(\C,0)$, the group of   germs of biholomorphisms  at $(\C,0)$.
We refer to \cite{bracci2010, loray2005}  for a more extensive treatment of the subject.
Fixing an analytic coordinate $z$ at $(\C,0)$, an element $\phi \in \diff(\C,0)$
has a Taylor series expansion in the form $\phi(z) = \lambda z + \cdots$, where $\lambda \in \C^{*}$
and the dots stand for
the higher order terms. The number $\lambda$ is the \emph{multiplier} of $\phi$ and can be written
as $\lambda = e^{2 \pi i \alpha}$ for some $\alpha \in \C$, which is uniquely determined modulo
the sum of integer numbers. An element $\phi \in \diff(\C,0)$ can be:
\begin{itemize}
\item  \emph{Hyperbolic}, if $|\lambda| \neq 1$, corresponding to $\alpha \not \in \R$. In this case, $\phi$ is   analytically linearizable.
\item \emph{Elliptic}, if $|\lambda| = 1$ and $\alpha \in \R \setminus \Q$. Such an element is formally linearizable. When $\phi$ is analytically linearizable, we  call it an \emph{irrational rotation}.
\item    \emph{Parabolic},  if $|\lambda| = 1$ and $\alpha \in  \Q$. We also say that $\phi$ is \emph{resonant} and, in the analytically linearizable case, that it is a \emph{rational rotation}. In the particular case  $\lambda =1$ and $\phi \neq id$, we say that $\phi$ is \emph{tangent to the identity}.

\end{itemize}
Note that a germ of diffeomorphism $\phi \in \diff(\C,0)$
of finite order is necessarily parabolic.
If $G \subset \diff(\C,0)$ is an abelian  subgroup containing an element $\phi$ which is either hyperbolic or elliptic, than $G$ is linearizable in the same coordinate that linearize $\phi$. In particular, if $\phi$ is analytically
linearizable, then $G$ is also analytically linearizable. A standard argument shows that a finite subgroup $G \subset \diff(\C,0)$ is analytically linearizable. In this case, it must be cyclic and generated
 by one of its elements of highest order.

A germ of real analytic one-dimensional foliation
$\F$
at $(\C,0)$ with an isolated singularity at $0 \in \C$ is \emph{leafwise invariant} by $\phi \in \diff(\C,0)$ if, for every  sufficiently small $z \in \C \setminus \{0\}$,
 $z$ and $\phi(z)$ lie on the same leaf of $\F$.
We say that $\F$ is \emph{leafwise invariant} by   a subgroup $G  \subset \diff(\C,0)$ if  $\F$
is \emph{leafwise invariant} by every $\phi \in G$.
 In this case, we say that   $G$ is a subgroup of \emph{center type}   if  $\F$ is a foliation   of center type, meaning that
all orbits outside $0 \in \C$ are closed.

We have the following characterization of center type  subgroups of $\diff(\C,0)$:

\begin{lem}
\label{lem-dynamics}
Let $G \subset \diff (\C,0)$ be a center type   subgroup.
Then $G$ is an abelian group, without hyperbolic elements,    that
 \begin{itemize}
 \item   either contains an irrational rotation and, thus, is  analytically linearizable;
 \item or is formed only by parabolic elements of finite order.
 \end{itemize}
Besides, $G$ is a finite cyclic group if there is a second germ of real analytic foliation for which
$G$ is leafwise invariant.
\end{lem}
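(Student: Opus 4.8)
The plan is to read off all the structure from one dynamical fact. Since $\F$ is leafwise invariant by $G$, for every $\phi\in G$ and every small $z\neq 0$ the entire orbit $\{\phi^{n}(z):n\in\Z\}$ lies on the single leaf of $\F$ through $z$ (induct on leafwise invariance, using $\phi^{-1}\in G$). As $\F$ is of center type this leaf is a compact curve avoiding $0$; moreover the leaves meeting a small closed disc $\{|z|\le r\}$ fill a compact neighbourhood of $0$. Hence, for each $\phi\in G$, the iterates $\{\phi^{n}\}$ are uniformly bounded on $\{|z|\le r\}$ and thus form a normal family, and no orbit accumulates at $0$.

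First I would linearize every element by averaging. For $\phi\in G$ with multiplier $\lambda=\phi'(0)$ put $h_{N}=\frac1N\sum_{n=0}^{N-1}\lambda^{-n}\phi^{n}$; then $h_{N}(0)=0$ and $h_{N}'(0)=1$, and the $h_{N}$ are uniformly bounded, so by Montel a subsequence converges to a holomorphic $h$ with $h'(0)=1$, a biholomorphism germ. The relation $h_{N}\circ\phi=\lambda h_{N}+\tfrac{\lambda}{N}(\lambda^{-N}\phi^{N}-\mathrm{id})$ has an error term that is $O(1/N)$ by boundedness, so in the limit $h\circ\phi\circ h^{-1}(w)=\lambda w$: every $\phi$ is analytically linearizable to the rotation $w\mapsto\lambda w$. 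In particular $|\lambda|=1$ (a hyperbolic element would have orbits tending to $0$), and $\lambda(\phi)=1$ forces $\phi=\mathrm{id}$ (a linearizable germ with unit multiplier is the identity; equivalently a nontrivial element tangent to the identity has petal orbits converging to $0$). Thus $\phi\mapsto\lambda(\phi)$ is an injective homomorphism $G\hookrightarrow S^{1}$, whence $G$ is abelian and free of hyperbolic elements.

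The alternatives now depend on $\lambda(G)\subset S^{1}$. If some $\lambda(\phi_{0})$ is not a root of unity, then $\phi_{0}$ is an irrational rotation; in its linearizing coordinate the orbit $\{\lambda(\phi_{0})^{n}w_{0}\}$ is dense in the circle $\{|w|=|w_{0}|\}$ and lies in one closed leaf of $\F$, so that leaf is exactly the circle. Then $\F$ is the foliation by circles, every $\psi\in G$ preserves all of them, so $|\psi(w)|\equiv|w|$ and $\psi(w)=\mu w$ with $|\mu|=1$; all elements are genuine rotations, $G$ is analytically linearized and contains an irrational rotation, which is the first alternative. Otherwise every multiplier is a root of unity, so every element is parabolic, and boundedness upgrades this to finite order ($\phi^{q}$ would be tangent to the identity, hence the identity), which is the second alternative.

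For the final assertion I would first note that a second, distinct leafwise-invariant foliation $\F'$ excludes the first alternative: an irrational rotation in $G$ would, by the density argument applied to $\F'$, force $\F'$ to be the circle foliation as well, i.e.\ $\F'=\F$. Hence $G$ is a torsion group, and each nonidentity element, being analytically conjugate to a nontrivial finite rotation, fixes only $0$, so $G$ acts freely on a punctured neighbourhood. Since $\F\neq\F'$, a closed leaf $C$ of $\F$ carries a finite nonempty $G$-invariant set, namely $C\cap\tang(\F,\F')$ when the tangency locus is a curve; and if instead $\F$ and $\F'$ are transverse off $0$, then a leaf of $\F'$ meets each closed leaf of $\F$ in at most one point, whence every $\phi\in G$ fixes that point and $G$ is trivial. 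By freeness the action of $G$ on the finite set is faithful, so $G$ is finite, and a finite abelian subgroup of $S^{1}$ is cyclic. The steps I expect to be most delicate are the analytic linearization — ensuring the averaged limit is a genuine biholomorphism and that the functional equation survives passage to the limit — and producing the finite invariant set uniformly in the two-foliation case.
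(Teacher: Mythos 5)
Your argument is essentially correct, but it takes a genuinely different route from the paper's. The paper derives everything from the single fact that orbits cannot accumulate at $0$, and then quotes two nontrivial theorems: the flower theorem of Camacho (to kill elements tangent to the identity) and P\'erez-Marco's solution of the Dulac--Moussu conjecture (to linearize elliptic elements), with abelianity coming from the commutator trick. You replace both citations by one elementary, self-contained mechanism: the closed leaves of $\F$ are nested curves enclosing $0$, so all leaves through a small disc are trapped inside the compact disc bounded by any fixed leaf; hence the iterates of each $\phi\in G$ are uniformly bounded, and the Ces\`aro averages $h_N=\frac1N\sum_{n=0}^{N-1}\lambda^{-n}\phi^n$ linearize $\phi$ outright. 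This buys more than the paper needs: linearizability of \emph{every} element makes ``tangent to the identity $\Rightarrow$ identity'', the injectivity of the multiplier homomorphism $G\hookrightarrow S^1$ (hence abelianity), and the finiteness of the order of parabolic elements all immediate, with no deep input. One ordering slip: as written, the uniform boundedness of $h_N$ and of the error term $\frac{\lambda}{N}(\lambda^{-N}\phi^N-\mathrm{id})$ already presupposes $|\lambda|=1$, so you must \emph{first} exclude hyperbolic elements (backward or forward orbits would leave the compact invariant leaf and tend to $0$; alternatively the Cauchy estimate $|\lambda|^{\pm n}\le M/r$ applied to $\phi^{\pm n}$ forces $|\lambda|=1$) and only then run the averaging. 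You state the exclusion, but after invoking it.

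For the final assertion your mechanism (produce a finite nonempty $G$-invariant set off the origin and use freeness of the action) also differs from the paper's, which instead bounds the order of every element by the cardinality of the intersection of two intersecting, distinct leaves of the two foliations. Your version leaves two claims unproved that need real arguments. First, in the transverse case, the assertion that a leaf of $\F'$ meets each closed leaf $C$ of $\F$ in at most one point is true but not obvious: transversality to the nested family of closed leaves forces a ``depth'' function (e.g.\ the area enclosed by the $\F$-leaf through a point) to be locally, hence globally, strictly monotone along any leaf of $\F'$, so it cannot return to the same $\F$-leaf. Second, in the tangency case you must check that $C\cap\tang(\F,\F')$ is nonempty and finite: nonemptiness holds because, by the local conic structure of real analytic sets, the branches of the tangency curve emanate from $0$ and so cross every small closed leaf; finiteness requires choosing $C\not\subset\tang(\F,\F')$, which is possible since otherwise the tangency set would have nonempty interior and the identity principle would give $\F=\F'$ --- the same remark disposes of the case, missing from your dichotomy, of a two-dimensional tangency locus. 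With these points filled in, your proof is complete.
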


\begin{proof}
Since $G$ preserves closed curves around the origin,
each  orbit by $G$ of $z \neq 0$ cannot   accumulate  to the origin.
This gives at once the following facts:
\begin{enumerate}[label=(\roman*)]
\item $G$ contains no hyperbolic elements.
\item   \label{tang-id} $G$ contains no  diffeomorphisms tangent to the identity,
by the Flower Theorem \cite{camacho1978}.
\item Thus, the commutators of $G$, having multiplier $\lambda =1$,   cannot be tangent to the identity.
So they must be equal to the identity, implying that $G$ is abelian.
\item All elliptic elements are
analytically linearizable. Indeed, by  Dulac-Moussu's conjecture proved in \cite{perez-marco1997}, a non-linearizable elliptic
germ of diffeomorphism has an orbit that accumulates to the origin. \label{res}
\end{enumerate}
Consequently, if $G$ contains an elliptic  element it is analytically linearizable. Otherwise,
  $G$ is an abelian group   containing only parabolic elements, all of them of
finite order, since there are no elements
 tangent to the identity.

Consider now the last part of the statement. If  there existed
an elliptic element $\phi \in G$, then, in a coordinate $z$ of  $(\C,0)$ that linearizes $\phi$,    for each $r>0$ small, the orbits  of $G$ would be dense in the circles $z = r$, which are the leaves of the center type foliation leafwise invariant by $G$ --- and this should be the only foliation invariant by $G$.  On the other hand, $G$ cannot contain parabolic elements  of arbitrarily high order, since this would imply that intersecting leaves of the two foliations leafwise invariant by $G$ would have infinitely many points in common. This gives that
  $G$   is   finite cyclic, as we wished.
\end{proof}

\begin{cor} Let $G \subset \diff (\C,0)$ be a finitely generated center type   subgroup.
Then $G$ is analytically linearizable.
\end{cor}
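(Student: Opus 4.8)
The plan is to feed the finite generation hypothesis into the dichotomy already established by Lemma \ref{lem-dynamics}. Since $G$ is a center type subgroup, that lemma applies verbatim: $G$ is abelian, contains no hyperbolic elements, and falls into exactly one of two cases — either it contains an irrational rotation, or every one of its elements is parabolic of finite order. I would treat these two cases separately, and finite generation will only be needed in the second.

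In the first case there is nothing extra to do. An irrational rotation is an elliptic element that is analytically linearizable, and, as recalled in the paragraph preceding Lemma \ref{lem-dynamics}, an abelian subgroup of $\diff(\C,0)$ containing an analytically linearizable element is itself analytically linearizable in the coordinate that linearizes that element. Hence $G$ is analytically linearizable, regardless of whether it is finitely generated.

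The substance of the corollary lies in the second case, where every element of $G$ is parabolic of finite order. Here I would invoke the structure theorem for finitely generated abelian groups: since $G$ is abelian (by Lemma \ref{lem-dynamics}) and finitely generated (by hypothesis), one has $G \cong \Z^{r} \times T$ with $T$ a finite group. Because every element of $G$ has finite order, the free part is trivial, i.e. $r = 0$, so $G = T$ is finite. It then remains only to apply the standard fact, also recalled before Lemma \ref{lem-dynamics}, that a finite subgroup of $\diff(\C,0)$ is analytically linearizable (in fact cyclic, generated by an element of highest order). This completes the proof.

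The single delicate point — really the crux rather than an obstacle — is the implication \emph{finitely generated $+$ abelian $+$ torsion} $\Rightarrow$ \emph{finite}. This is precisely where the finite generation hypothesis, which is absent from Lemma \ref{lem-dynamics}, does its work: without it, the parabolic case could in principle accommodate an infinite torsion group generated by elements of unbounded order, for which no single linearizing coordinate need exist. Finite generation forecloses exactly this possibility.
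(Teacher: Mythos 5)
Your proof is correct and follows essentially the same route as the paper: reduce via Lemma \ref{lem-dynamics} to the case where $G$ is abelian with all elements parabolic of finite order, use finite generation to conclude $G$ is finite, and then invoke the standard fact that finite subgroups of $\diff(\C,0)$ are analytically linearizable. Your write-up merely makes explicit (via the structure theorem for finitely generated abelian groups) the step the paper compresses into ``the fact that $G$ is finitely generated gives that it is finite.''
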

\begin{proof} By    Lemma \ref{lem-dynamics}, it is enough to consider the case where $G$     is an abelian group whose elements are parabolic   of
finite order. However, the  fact that $G$ is finitely generated gives that it is  finite  and, thus,
 analytically linearizable.
\end{proof}

Let $\G$ be a germ of singular holomorphic  foliation at $(\C^{2},0)$.
We recall that there exists a reduction of singularities for $\G$  \cite{seidenberg1968}:
a proper holomorphic map
$\sigma: (\tilde{M},\D) \to (\C^{2},0)$, which is a composition of blow-ups, where $\D = \sigma^{-1}(0)$
is    a divisor consisting of a finite union projective lines with normal crossings.
It transforms $\G$ into a foliation
$\tilde{\G} =\sigma^{*} \G$ --- its \emph{strict transform} --- whose singularities are over $\D$ and
are all \emph{simple} or \emph{reduced}.  This means that, at each such point,
$\tilde{\G}$ is induced by a vector field whose linear part is non-nilpotent and,
 when its eigenvalues $\lambda_{1}, \lambda_{2}$ are both  non-zero, they satisfy $\lambda_{1} / \lambda_{2} \not\in \Q_{+}$. Such a singularity is said to be \emph{non-degenerate}.
  A simple singularity is a \emph{saddle-node} if one of the eigenvalues is zero.
  The reduction of singularities is not unique, but we can fix a minimal non-trivial
 reduction of singularities --- two   minimal reductions are isomorphic.
The number of blow-ups performed is the \emph{length} of the reduction process.
 Note that, if $\G$ has a
simple singularity at $0 \in \C^{2}$, we   have to blow-up it once in order to get a reduction of singularities and the associated length is one.
We recall the following definition of \cite{camacho1984}: $\G$ is a \emph{generalized curve foliation} if there are no saddle-nodes in its reduction of singularities.

A component  of $\D$ can be \emph{non-dicritical}, if it is
 invariant by $\tilde{\G}$, or \emph{dicritical}, otherwise. The foliation $\G$ itself is said to be
 \emph{non-dicritical} is all components of $\D$ are non-dicritical.
To each non-dicritical component
$D \subset \D$ we associate a subgroup  of   $\diff(\C,0)$ in the following way.
Take  a point $q \in D \setminus
\sing(\tilde{\G})$ and  a germ $(\Sigma,q) \simeq
(\C,0)$  of holomorphic section transversal to $D$ at $q$.
The \emph{virtual holonomy group}
is the subgroup  $\holv(\tilde{\G},D) \subset \diff(\C,0)$ formed by all germs of diffeomorphisms
$\phi \in \diff(\C,0)$ such that $z$ and $\phi(z)$ are in the same leaf of $\tilde{\G}$ for every
$z \in \Sigma$ sufficiently small.
 In principle, $\holv(\tilde{\G},D)$ depends  on the point  $q \in D \setminus
\sing(\tilde{\G})$ and on the section $\Sigma$ at $q$, but
we   get rid of this dependence by considering it  up to conjugation by
a germ of diffeomorphism in $\diff(\C,0)$.
The next two results regard the classification of a foliation by
means of its virtual holonomy. We first have:

\begin{prop}
\label{first-integral}
Let $\G$ be a germ of non-dicritical holomorphic foliation  at $(\C^{2},0)$ of generalized curve type.
Suppose that, for every component $D$ of its reduction divisor $\D$, the virtual holonomy
$\holv(\tilde{\G},D)$ is an abelian group with only finite order elements. Then $\G$ has a holomorphic first integral.
\end{prop}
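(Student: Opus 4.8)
The plan is to pass to a reduction of singularities and work component by component on the virtual holonomy. First I would fix a reduction of singularities $\sigma\colon(\tilde M,\D)\to(\C^{2},0)$ for $\G$, with strict transform $\tilde{\G}=\sigma^{*}\G$. Since $\G$ is non-dicritical, every component $D\subset\D$ is invariant by $\tilde{\G}$; since $\G$ is a generalized curve, no saddle-node occurs, so every point of $\sing(\tilde{\G})$ is a non-degenerate reduced singularity, with eigenvalue ratio $\lambda_{1}/\lambda_{2}\notin\Q_{+}$. The goal is then to build a holomorphic first integral $F$ for $\tilde{\G}$ on a neighborhood of $\D$ and push it down through $\sigma$.

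The key step is to show that, for each component $D$, the group $\holv(\tilde{\G},D)$ is in fact \emph{finite cyclic}. The ordinary holonomy of $D$ is the image of $\pi_{1}$ of a projective line with the finitely many points of $\sing(\tilde{\G})\cap D$ removed, hence is finitely generated; as a subgroup of the abelian torsion group $\holv(\tilde{\G},D)$ it is a finitely generated abelian group all of whose elements have finite order, so it is finite, and therefore cyclic and analytically linearizable (this is the situation treated by Lemma~\ref{lem-dynamics} and its corollary). In a linearizing transversal coordinate $z$ the generator is a rotation $z\mapsto e^{2\pi i/m}z$, the nearby leaves are closed and the local leaf space is the $z$-plane via $z\mapsto z^{m}$. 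Any element of the virtual holonomy must preserve this fibration, so it is forced to be one of the rotations $z\mapsto e^{2\pi i j/m}z$; thus $\holv(\tilde{\G},D)$ coincides with the ordinary holonomy and is finite cyclic, and $z^{m}$ provides a holomorphic first integral on a neighborhood of $D\setminus\sing(\tilde{\G})$. At each $p\in\sing(\tilde{\G})$ the holonomy of an adjacent component around $p$ lies in a finite group, hence has finite order; a non-degenerate singularity with finite-order holonomy is linearizable and resonant, with eigenvalue ratio a negative rational $-a/b$, so it carries a local monomial first integral $u^{b}v^{a}$ in coordinates straightening its two local separatrices.

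It remains to assemble these pieces. The first integral near $D\setminus\sing(\tilde{\G})$ and the monomial integrals at the singular points are each defined only up to composition with a germ in $\diff(\C,0)$ (a reparametrization of the leaf space); on overlaps they determine the same foliation, so they differ by such reparametrizations, and the work is to normalize them simultaneously into a single-valued holomorphic first integral $F$ on a neighborhood $W$ of $\D$. The finite cyclic holonomies together with the resonant monomial models at the corners are exactly what trivialize the resulting matching data, as in the classical Mattei--Moussu argument; equivalently, the finiteness of all the virtual holonomy groups forces every leaf of $\G$ in a punctured neighborhood of $0$ to be closed, and, non-dicriticity supplying only finitely many separatrices, one may invoke the Mattei--Moussu criterion directly. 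Finally, $\sigma$ restricts to a biholomorphism $W\setminus\D\to\sigma(W)\setminus\{0\}$, so $f=F\circ\sigma^{-1}$ is a holomorphic first integral for $\G$ on a punctured neighborhood of $0$; being bounded, it extends across $0$ by Riemann's theorem, giving the desired first integral.

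The main obstacle I expect is this last gluing/normalization: turning the locally defined, reparametrization-ambiguous first integrals into one globally single-valued holomorphic function over the whole divisor, matching consistently across the corners. This is precisely where the hypotheses that each $\holv(\tilde{\G},D)$ is abelian with only finite-order elements are used. A secondary delicate point is the reduction carried out above, namely verifying that the virtual holonomy collapses onto the finite cyclic ordinary holonomy rather than being strictly larger.
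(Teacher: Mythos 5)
Your overall strategy is sound and lives in the same circle of ideas as the paper's proof (both are Mattei--Moussu type arguments, and both hinge on the observation that a finitely generated abelian group all of whose elements have finite order is finite), but the architecture is genuinely different. The paper argues by induction on the length of the reduction of singularities: after a single blow-up $\pi$, the germs $\G_1,\ldots,\G_n$ at the singular points of $\pi^{*}\G$ have holomorphic first integrals by the induction hypothesis; their virtual holonomies $H_j$ relative to the exceptional line are then finite, so $H=\langle H_1,\ldots,H_n\rangle$ is a finitely generated abelian torsion subgroup of $\holv(\tilde{\G},D)$, hence finite; the gluing is then exactly the single-blow-up statement of \cite[Lem.~3]{mattei1980}. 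You instead work over the whole reduction divisor at once: each component's ordinary holonomy is finitely generated (image of $\pi_1$ of a punctured line), abelian and torsion by hypothesis, hence finite cyclic and linearizable; the corners are linearizable resonant with monomial integrals; then you glue. The trade-off is that your route avoids the induction but requires a gluing statement over an arbitrary normal-crossings divisor, a strictly stronger black box than the paper's Lemma 3 --- and the classical way to prove that stronger statement is precisely the induction the paper performs.

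Beyond this structural point, two specific steps in your write-up do not hold as stated. First, your argument that $\holv(\tilde{\G},D)$ collapses onto the ordinary holonomy: the first integral $z^{m}$ is constructed only on a saturated neighborhood of $D\setminus\sing(\tilde{\G})$, and it is constant only on connected pieces of leaves inside that neighborhood. A virtual holonomy element joins $z$ to $\phi(z)$ along a global leaf that may exit this neighborhood (pass through corners, wind around other components of $\D$), so you cannot conclude $\phi(z)^{m}=z^{m}$; fortunately this claim is never used afterwards. Second, and more seriously, the ``equivalently'' shortcut --- that finiteness of the virtual holonomy groups forces all leaves to be closed, so that Mattei--Moussu's criterion (finitely many separatrices plus closed leaves) applies directly --- is unjustified: by definition the orbits of $\holv(\tilde{\G},D)$ are contained in, but need not exhaust, the intersection of a leaf with a transversal, so finiteness of these groups does not by itself bound leaf-transversal intersections (and in any case what you actually proved finite is the ordinary holonomy, not the virtual one). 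Deducing closed leaves from the finite component holonomies and the corner models is exactly the gluing work you deferred, so as written your conclusion rests on a black box whose content is essentially the proposition itself. This is the difficulty the paper's induction is designed to eliminate: it reduces everything to one blow-up, where the gluing becomes a precise, citable lemma.
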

\begin{proof}
The proposition is obvious if
  $\mathcal{G}$ has a simple singularity at $0 \in \C^{2}$:
 we   blow-up   once in order to
 get a reduction of singularities, use the fact  the virtual holonomy is finite and
 apply standard arguments in order to build a holomorphic first integral.
  Let us suppose the result   true for foliations with reduction of singularities of length at most $k$.
  Let $\G$ be a foliation with the properties in the statement with reduction of singularities of length $k+1$.
  We make a first blow-up
$\pi\colon (M,D)\to(\mathbb{C}^2,0)$. Let $p_1,\ldots,p_n\in D$ be the singularities of the strict transform foliation $\tilde{\G}_{0} = \pi^{*} \G$ over $D$ and denote  by  $\mathcal{G}_1$, \ldots, $\mathcal{G}_n$ the germs of $\tilde{\mathcal{G}}_{0}$ at
$p_1,\ldots,p_n$, respectively.
The induction hypothesis assures the existence of holomorphic first integrals for
 $\mathcal{G}_1$, \ldots, $\mathcal{G}_n$.
Let   $(\Sigma,q)$ be a transversal section to $D$ at $q\in D\backslash\{p_1,\ldots,p_n\}$.
Clearly, we can calculate on $\Sigma$ the virtual holonomy groups
 $H_j=  \holv(\mathcal{G}_j,D)$, $j=1\ldots,n$, which are finite groups.
 They are subgroups of $\holv(\tilde{\G}_{0},D) =\holv(\tilde{\G},D)$, the same holding for
 $H=\langle H_1,\ldots,H_n\rangle$. Now, $H$ is a finitely generated abelian group having only elements
 of finite order. Thus, it is a finite group.
 The result then follows from \cite[Lem. 3]{mattei1980}.
\end{proof}

We say that a germ of foliation $\G$ of codimension one at $(\C^{n},0)$, $n \geq 2$, is \emph{logarithmic} if there are   germs of irreducible  functions
$f_{1}, \ldots, f_{k} \in \cl{O}_{n}$ and coefficients $\lambda_{1}, \cdots, \lambda_{k} \in \C^{*}$ such
that $\G$ is defined by the meromorphic $1-$form
\begin{equation}
\label{log-1form}
 \omega = \sum_{j=1}^{k} \lambda_{j} \frac{d f_{j}}{f_{j}}.
\end{equation}
If there exists  $\mu \in \C^{*}$ such that $\mu  \lambda_{i} \in \R$ for every $i=1,\ldots,k$, we say that
$\G$ is  \emph{logarithmic with real residues}. In particular, if $\mu$ can be taken in such a way that $n_{i} = \mu  \lambda_{i} \in \Z$
for every $i=1,\ldots,k$, then
$\G$ admits the meromorphic first integral
$f = f_{1}^{n_{1}} \cdots f_{k}^{n_{k}}$, which is holomorphic if $n_{i}   \in \Z_{+}$ for every $i=1,\ldots,k$.
The following  conditions on the virtual holonomy groups are sufficient to conclude that
a holomorphic foliation is logarithmic:

\begin{prop}
\label{log-first-integral}
Let $\G$ be a germ of non-dicritical  holomorphic foliation  at $(\C^{2},0)$
of generalized curve type. Suppose that:
\begin{enumerate}[label=(\roman*)]
\item  for every component $D \subset \D$,  the virtual holonomy
$\holv(\tilde{\G},D)$ is   analytically linearizable;
\item  for some   $D \subset \D$, there exists    an irrational rotation in  $\holv(\tilde{\G},D)$.
\end{enumerate}
 Then $\G$ is a logarithmic foliation.
 Besides, if there are no hyperbolic elements in some $\holv(\tilde{\G},D)$, then $\G$ is logarithmic
 with real residues.
\end{prop}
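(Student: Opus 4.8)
The plan is to reduce singularities, manufacture a closed logarithmic $1$-form over the exceptional divisor out of the linearized virtual holonomy, and push it down. Since $\G$ is a non-dicritical generalized curve foliation, its minimal reduction of singularities $\sigma\colon(\tilde{M},\D)\to(\C^{2},0)$ produces a strict transform $\tilde{\G}=\sigma^{*}\G$ all of whose singularities are non-degenerate and simple (there are no saddle-nodes), with every component of $\D$ invariant. Let $S=\{f=0\}$, $f=f_{1}\cdots f_{k}$ with $f_{j}\in\OO_{2}$ irreducible, be the finite set of separatrices of $\G$; because $\G$ is a generalized curve, the desingularization of $S$ coincides with $\sigma$. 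The goal is to produce a closed meromorphic $1$-form $\Theta$ on a neighborhood of $\D$, logarithmic with simple poles along $\D$ and along the strict transform $\tilde{S}$, that defines $\tilde{\G}$; pushing it down will give a closed logarithmic form defining $\G$.

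First I would build $\Theta$ locally. Fix a non-dicritical component $D\subset\D$ and a transversal $(\Sigma,q)\simeq(\C,0)$ at a regular point. By hypothesis (i) the group $\holv(\tilde{\G},D)$ is analytically linearizable, so there is a coordinate $z_{D}$ on $\Sigma$ in which every holonomy map acts as $z_{D}\mapsto\lambda z_{D}$. The logarithmic form $dz_{D}/z_{D}$ is then invariant under the holonomy and extends, along the leaves of $\tilde{\G}$, to a closed meromorphic $1$-form $\Theta_{D}$ on a neighborhood of $D\setminus\sing(\tilde{\G})$ with a simple pole on $D$ and inducing $\tilde{\G}$ there. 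Each $\Theta_{D}$ is determined once the scale of $z_{D}$ is fixed.

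The core of the argument is to glue the $\Theta_{D}$ into a single global $\Theta$, and this is where hypothesis (ii) enters. A component $D_{0}$ whose virtual holonomy contains an irrational rotation has an essentially unique linearizing coordinate (any two linearizations of an irrational rotation commute with it, hence differ by a linear map), so $\Theta_{D_{0}}$ is canonical up to one multiplicative constant. I would then propagate this rigidity along the dual tree of $\D$: each corner point is a non-degenerate simple singularity whose two separatrices are the adjacent divisor components, and the local linear model forces the two incident forms $\Theta_{D}$, $\Theta_{D'}$ to coincide up to the scaling dictated by the eigenvalue ratio there. Since the dual graph is a connected tree, these compatibility relations fix all scales consistently from the one on $D_{0}$, yielding a global closed meromorphic $\Theta$. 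Verifying this matching at every corner --- and that the residues along the exceptional components are the ones compatible with descent and with the Camacho--Sad indices --- is the main technical obstacle.

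Finally I would descend and read off the residues. Because $\Theta$ is logarithmic with poles along $\D\cup\tilde{S}$ and the exceptional components are contracted by $\sigma$, its pushforward $\omega=\sigma_{*}\Theta$ is a logarithmic $1$-form defining $\G$ with poles only along $S$, necessarily of the form $\sum_{j}\lambda_{j}\,df_{j}/f_{j}$; hence $\G$ is logarithmic. For the last assertion, recall that in this model the multiplier of the holonomy of $\tilde{\G}$ around a branch equals $\exp(-2\pi i\,\lambda_{j}/\lambda_{j'})$ for the relevant residues. If some $\holv(\tilde{\G},D)$ has no hyperbolic elements, all of its multipliers lie on the unit circle, forcing every such ratio $\lambda_{j}/\lambda_{j'}$ to be real; since the virtual holonomy of a single component of a connected divisor detects the monodromy around all branches, all the residues $\lambda_{j}$ become real multiples of a common constant, i.e. there is $\mu\in\C$, $\mu\neq 0$, with $\mu\lambda_{j}\in\R$ for all $j$. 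This is exactly the statement that $\G$ is logarithmic with real residues.
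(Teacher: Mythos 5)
Your strategy is essentially the paper's own: it adapts \cite{camacho1992} (with the irrational rotation playing the role of the hyperbolic element there), building logarithmic $1$-forms near each component of $\D$ from linearized holonomy, matching them at the corners, pushing down, and reading the residues off the eigenvalue ratios. But there is a genuine gap at the point you yourself flag as ``the main technical obstacle'', and it is not merely technical. First, on a component $D$ whose virtual holonomy contains no irrational rotation (and no hyperbolic element), your $\Theta_{D}$ is not canonical: if $\holv(\tilde{\G},D)$ is, say, a finite group of rational rotations of order $k$, its linearizing coordinate is unique only up to maps $z \mapsto z\,u(z^{k})$ with $u(0)\neq 0$, and different choices change $\Theta_{D}$ by a non-constant closed factor. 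Second, and consequently, the corner matching fails as stated: at a resonant linearizable corner, with local model $q\,z_{2}\,dz_{1} + p\,z_{1}\,dz_{2}$ and holomorphic first integral $H = z_{1}^{q} z_{2}^{p}$, the $1$-form $(1+H)\bigl(q\,\frac{dz_{1}}{z_{1}} + p\,\frac{dz_{2}}{z_{2}}\bigr)$ is again closed, has simple poles, and defines the same foliation, so ``the local linear model'' does not force two incident forms to agree up to a multiplicative constant. Your propagation of rigidity from $D_{0}$ along the dual tree therefore never gets started.

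The missing ingredient --- and the first step of the paper's proof --- is \cite[Lem.~3]{camacho1992}: an irrational rotation in one $\holv(\tilde{\G},D)$ transfers through the corners into the virtual holonomy of \emph{every} component. (This is exactly where virtual holonomy, as opposed to ordinary holonomy, is essential; your argument never actually uses virtuality.) Once every component carries an irrational rotation, linearizing coordinates are rigid on every transversal, all path-lifting maps between transversals are linear in such coordinates, and each local form can be normalized by the paper's Property $(*)$, namely $\omega_{D}|_{\Sigma} = dz/z$ in \emph{any} linearizing coordinate. Only after this normalization does the corner comparison yield genuine constants $c \in \C^{*}$, and then your tree argument and the descent go through. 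The same transfer is needed in your final paragraph: the hypothesis of ``no hyperbolic elements in \emph{some} $\holv(\tilde{\G},D)$'' must first be upgraded to ``in every $\holv(\tilde{\G},D)$'' before one can conclude that the eigenvalue ratio, hence the ratio of residues, at every corner is real.
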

\begin{proof} This result has been proved     in \cite{camacho1992} assuming the
existence of a hyperbolic element in place of an irrational rotation.
The proof of our version
follows the very same steps, which we describe next. Essentially, the following fact is used: if $G \subset \diff (\C,0)$ is an abelian subgroup
containing an irrational rotation $\phi$,  then the whole $G$ is   linear
in the same coordinate that linearizes $\phi$.
 We delineate a sketch of the proof in order to check how the  argument works.
First of all,   each  non-degenerate simple singularity $p \in \sing(\tilde{\G})$  is linearizable  since the holonomy of each separatrix contained in $\D$ is linearizable \cite{mattei1980}.
We have the following steps:
\begin{itemize}[leftmargin=*]
\item as in \cite[Lem. 3]{camacho1992}, the existence of an irrational rotation in  $\holv(\tilde{\G},D)$
for some $D \subset \D$ implies the existence of an irrational rotation   in  $\holv(\tilde{\G},D)$ for every in $D \subset \D$.
\item Following \cite[Prop. 1]{camacho1992}, for each $D \subset \D$, there exists a logarithmic $1-$form $\omega_{D}$ defining $\tilde{\G}$ in a neighborhood of $D$ satisfying:  \\ \smallskip
\underline{Property $(*)$}: if $(\Sigma,q) \simeq (\C,0)$ is
    a transversal section of $D$ at $q \in D \setminus \sing(\tilde{\G})$ and $z$ is an analytic coordinate at $(\C,0)$  that linearizes $\holv(\tilde{\G},D)$ at $(\Sigma,q)$, then
    $$ \omega_{D}|_{\Sigma} = dz / z .$$
    The construction of $\omega_{D}$ near the regular points of $\sing(\tilde{\G})$ on $D$ is based on the fact that the existence of an irrational rotation in $\holv(\tilde{\G},D)$ implies that,  given
    $(\Sigma_{1},q_{1})$ and  $(\Sigma_{2},q_{2})$ transversal sections as above, with coordinates
    $z_{1}$ and $z_{2}$ which linearize  $\holv(\tilde{\G},D)$, then the germ of diffeomorphism  $f_{\gamma}: (\Sigma_{1},q_{1}) \to (\Sigma_{2},q_{2})$, obtained by the lifting of a path $\gamma$ in  $D \setminus \sing(\tilde{\G})$ linking $q_{1}$ to  $q_{2}$, is a linear map. The construction of
    $\omega_{D}$ in a neighborhood of a point $p \in \sing(\tilde{\G})$ relies on the fact that
    $\tilde{\G}$ is linearizable at $p$.
\item As in \cite[Lem. 4]{camacho1992}, given two
components $D_{1}, D_{2} \subset \D$ intersecting at a point $p$,
 then there exists    $c \in \C^{*}$ such that
$\omega_{D_{1}} = c \, \omega_{D_{2}}$.   This essentially follows from   Property $(*)$
and from the fact that
$\tilde{\G}$ is linearizable at $p$.
We take coordinates $(z_{1},z_{2})$ at $p$ in which $\tilde{\G}$ is induced by the $1-$form
 $\omega =  dz_{1}/z_{1} - \lambda dz_{2}/z_{2}$, where
 $\lambda \in \C^{*} \setminus \Q_{+}$, such that $D_{1} = \{z_{1}=0\}$,  $D_{2} = \{z_{2}=0\}$.
  For small $a_{1}, a_{2}  \in \C^{*}$, by taking sections $\Sigma_{1} = \{z_{2} = a_{2}\}$,   transversal to $D_{1}$,  and $\Sigma_{2} = \{z_{1} = a_{1}\}$, transversal to $D_{2}$,   then, if $\holv(\tilde{\G},D_{1})$ is linear in the coordinate $z_{1}$,   an irrational rotation in $\holv(\tilde{\G},D_{1})$ is transferred to $\holv(\tilde{\G},D_{2})$ as a linear map in the coordinate $z_{2}$ of $\Sigma_{2}$. Thus, $\holv(\tilde{\G},D_{2})$ is linear in the coordinate $z_{2}$. This implies the result.
 \item A final   adjustment  of the constants  $c \in \C^{*}$ along the components of the desingularization divisor $\D$ gives a logarithmic $1-$form $\tilde{\omega}$  that defines
     $\tilde{\G}$ in a neighborhood of $\D$. This corresponds to a logarithmic $1-$ form $\omega$, as in \eqref{log-1form},
     that defines $\G$ in a neighborhood of $0 \in \C^{2}$.
\end{itemize}
For the last part of the statement, we   first remark that,
invoking again \cite[Lem. 3]{camacho1992},  the groups  $\holv(\tilde{\G},D)$ are   devoid of hyperbolic elements
for all components $D \subset \D$.
By construction, the meromorphic
  $1-$form $\tilde{\omega} = \sigma^{*} \omega$,
  that induces  $\tilde{\G}= \sigma^{*}\G$,
    has  simple poles over the normal crossings divisor  $(\tilde{\omega})_{\infty} = \D \cup \tilde{S}_{1} \cup \cdots \cup \tilde{S}_{k}$,
where $\tilde{S}_{i} = \sigma^{*}S_{i}$ is the strict transform of
 $S_{i} = \{f_{i} = 0\}$.
Evidently, the residue  of $\tilde{\omega}$ along each curve $\tilde{S}_{i}$ is $\lambda_{i}$.
For a simple singularity $p \in \D$ of $\tilde{\G}$,  which is non-degenerate since $\G$ is
of generalized curve type, the  ratio of eigenvalues is the negative of
  the ratio   of residues associated to the two components of
$(\tilde{\omega})_{\infty}$ intersecting at $p$.
This ratio must be real, since otherwise
the holonomy of the two separatrices of
 $\tilde{\G}$ at $p$ would   engender hyperbolic elements in the  virtual holonomy.
This fact, considered along $(\tilde{\omega})_{\infty}$, leads to the conclusion  that $\omega$ has real residues.
 \end{proof}

We now have the elements to propose a proof for Theorem \ref{teo-tangent-to-levels}.

\medskip
\smallskip

\noindent\emph{Proof of Theorem \ref{teo-tangent-to-levels}.} \
Let $\G$ be a germ of holomorphic foliation as in the assertion, tangent to
the levels of
 $f \in \cl{Q}_{n \R}$.
By taking a smooth two-dimensional section $\rho: (\C^{2},0) \to (\C^{n},0)$ generically transversal
to $\G$   and   to the indeterminacy set of $f$ (that exists by \cite{mattei1980}), we can suppose that $n=2$ and  that $0 \in \C^{2}$ is an isolated singularity of $\G$
(see, for instance, \cite[Sec. 3.3]{cerveau2011}).  Note that
if a leaf of $\G$ accumulates to the origin,
 the same holds for the level of $f$ that contains it.
If this level has real   codimension one, it defines a $\G$-invariant real analytic Levi-flat hypersurface
at $(\C^{2},0)$. As a consequence of Cerveau-Lins Neto's Theorem \cite{cerveau2011}, $\G$ admits a meromorphic first integral, proving the result.
On the other hand, if this level has real codimension two, it must be   a complex analytic curve lying
in the singular locus of $f$. This curve is also a separatrix for $\G$.
Thus, we are reduced to the following situation:
there are finitely many leaves of $\G$ accumulating to the origin, all of them separatrices of $\G$
contained in the singular locus of $f$. This means, in particular, that $\G$ is a non-dicritical
generalized curve foliation.
Take a reduction of singularities $\sigma: (\tilde{M},\D) \to (\C^{2},0)$  for $\G$ and set
$\tilde{\G} = \sigma^{*} \G$. For every component $D \subset \D$,
the virtual holonomy groups $\HV(\tilde{\G},D)$ are subgroups of center type. Thus, by Lemma \ref{lem-dynamics}, they are all abelian, either analytically linearizable containing no hyperbolic elements or   formed by finite
order resonant elements. In the latter case,  Proposition  \ref{first-integral} gives a holomorphic
first integral for $\G$. In the former,
by Proposition \ref{log-first-integral}, we find that $\G$ is a logarithmic foliation with real residues. 
Actually, in this case, all simple singularities   of   $\tilde{\G}$   must have ratio
of eigenvalues in $\Q_{+}$ (see example bellow). This implies that  the residues of $\omega$ can be taken
in $\Z_{+}$, giving a holomorphic first integral for $\G$.
\qed

\begin{example}{\rm
Suppose that $\G$ has a simple non-degenerate linearizable singularity at $0 \in \C^{2}$, being defined by the $1-$form
\[ \omega = z_{2} dz_{1} - \lambda z_{1} dz_{2}, \ \ \text{with} \ \ \lambda \in \C^{*} \setminus \Q_{+}.\]
If $\G$ has a real meromorphic first integral $f \in \cl{Q}_{n \R}$ then $\lambda \in \Q_{-}$.
Indeed, by Cerveau-Lins Neto's Theorem, as in the above proof,
we are reduced to the case where the only leaves of $\G$ accumulating to the origin are
the separatrices $z_{1}=0$ and $z_{2}=0$. This discards the options $\lambda \in \C^{*} \setminus \R$ and
$\lambda \in \R_{+} \setminus \Q_{+}$ (nodal case). If
$\lambda \in \R_{-} \setminus \Q_{-}$,
the foliation $\G$ has the multivalued first integral $z_{1} z_{2}^{-\lambda}$, so that the real
analytic $1-$form
\[ |z_{1}z_{2}|^{2}  \, \frac{d  | z_{1} z_{2}^{-\lambda}|^{2}}{|z_{1} z_{2}^{-\lambda}|^{2}} =
\bar{z_{1}} |z_{2}|^{2} dz_{1} + z_{1} |z_{2}|^{2} d \bar{z_{1}} - \lambda |z_{1}|^{2} \bar{z_{2}} dz_{2}
- \lambda |z_{1}|^{2} z_{2} d\bar{z_{2}} \]
defines a Levi-flat foliation tangent to $\G$, which is independent of
the Levi-flat foliation defined by the levels $f$. Therefore, an application of Lemma \ref{lem-dynamics}
gives that   the holonomy group of each separatrix of
$\G$ is finite, which is a contradiction. The only remaining case is $\lambda \in  \Q_{-}$.
}\end{example}

\begin{example}{\rm We illustrate  Theorem \ref{teo-tangent-to-levels} with an example of a foliation with holomorphic
first integral tangent to the levels of a non-analytic   real  meromorphic function with real levels.
Consider coordinates $(z_{1},z_{2})$ in $\C^{2}$, where $z_{j} = x_{j} + i y_{j}$ for $j=1,2$, and
 the function $f \in \cl{Q}_{2 \R} \setminus \cl{A}_{2 \R}$
defined by
\[ f(z_{1},z_{2}) = \frac{x_{1}^{2} + y_{1}^{2}}{x_{1}^{4} + y_{1}^{4}} .\]
The only level of $f$ accumulating to $0 \in \C^{2}$ is the $z_{2}$-axis. Evidently, the non-singular vertical foliation $\G$
defined by $\omega = d z_{1}$ is tangent to the levels of $f$. In order to get a truly singular holomorphic foliation with holomorphic first integral,
it suffices to take the strict transform of $\G$ by the quadratic map $\pi(z,z_{2}) = (zz_{2},z_{2})$, where
$z= x+iy$,
getting a foliation $\tilde{\G}$ defined by $\tilde{\omega}=z_{2} dz + z dz_{2}$ which is tangent to the levels of
\[ \tilde{f}(z,z_{2}) = \frac{(xx_{2}-yy_{2})^{2} + (yx_{2}+xy_{2})^{2}}{(xx_{2}-yy_{2})^{4} + (yx_{2}+xy_{2})^{4}}.\]
}\end{example}


\section{Holomorphic foliations tangent to Levi-flat foliations}
\label{section-tangent-to-Levi-flat}

Let $\F$ be a germ of real analytic Levi-flat foliation at $(\C^{n},0)$,
 defined by a germ of
real analytic $1-$form $\omega$.
Keeping the notation of Section \ref{section-preliminaries}, we denote by
$\LL = \LL(\F)$ the Levi foliation, which we suppose to be holomorphic.
In this situation,  Theorem \ref{teo-main-local} asserts that there are
two possible models for $\F$,  described in the following examples.

\begin{example}
\label{ex-closed-form}
{\rm
Let $\tau$   be a   closed  meromorphic $1-$form at $(\C^{n},0)$. We include here the case
$\tau = d \rho$, where $\rho$ is either in  $\cl{O}_{n}$ or in $\cl{M}_{n}$.
Let    $\psi \in \cl{O}_{n}$ be an equation for the  poles of $\tau$.
 If $h \in \cl{A}_{n \R}$ is a unity,
then
\[ \omega  =  h \psi \bar{\psi}    \frac{  \tau   + \bar{\tau}}{2}   = h |\psi|^{2} \re( \tau) \]
is a real analytic $1-$form that satisfies the integrability condition. It defines a
Levi-flat foliation whose Levi foliation $\LL$ is holomorphic  defined by $\tau$.
}\end{example}

\begin{example}
\label{ex-meromorphic-integral}
{\rm
Let $\rho$ be a   non-constant meromorphic (or holomorphic) function at $(\C^{n},0)$ whose levels define a holomorphic foliation $\LL$. Let $\kappa \in \cl{Q}_{n}$
be a real  meromorphic function, with $\kappa/ \bar{\kappa}$  constant along the leaves of $\LL$, such that
the $1-$form $\eta = \kappa d \rho$  is real analytic
without non-trivial factors in $\cl{A}_{n \R}$.
Define the real analytic $1-$form
\[ \omega =  \frac{ 1}{2} \left(  \kappa d \rho  + \bar{\kappa} d \bar{\rho} \right)
=  \re(\kappa  d \rho). \]
Observe that the integrability condition,
\[  \omega \wedge d \omega = \frac{1}{4} \left( \bar{\kappa} d \kappa - \kappa d \bar{\kappa} \right) \wedge d \rho
\wedge d \bar{\rho} = 0,
\]
is equivalent to
$\kappa/\bar{\kappa}$ being constant along the leaves of $\LL$. Thus, $\omega$ defines a Levi-flat foliation
whose Levi foliation is $\LL$.
Note that $\LL$ admits real meromorphic first integrals
in  $\cl{Q}_{n \R}$ (for instance, $\re(\rho)$ and
$\im(\rho)$, as well as $\re(\kappa/ \bar{\kappa})$ and $\im(\kappa/ \bar{\kappa})$, if these are non-constant).
}\end{example}

As seen in Subsect. \ref{section-levi}, the Levi foliation $\LL$
is tangent to the distribution of complex hyperplanes
 defined by the   real analytic  $1-$form $\eta$ of type $(1,0)$ such that
$\omega = \re(\eta)$ (equation \eqref{eq-omega}). By Lemma \ref{lem-holomorphic-L}, the
 condition of $\LL$ being holomorphic is equivalent to existence
of $\phi \in \A_{n}$ such that
$ \eta = \phi \sigma$,
where  $\sigma$  is an integrable holomorphic $1-$form at $(\C^{n},0)$ that defines $\LL$.
Writing
$\sigma =  \sum_{j=1}^{n} \alpha_{j}(z) dz_{j}$,
where $\alpha_{j} \in \cl{O}_{n}$, for $j=1,\ldots,n$, are without common factors, we get,
by taking mirrors, the integrable holomorphic $1-$form $\sigma^{*} =  \sum_{j=1}^{n} \alpha_{j}^{*}(w) dw_{j}$
that defines the foliation $\LL^{*}$ in $\C^{n*}$.
Their complexifications produce two product foliations  at  $(\C^{n} \times \C^{n*},0) \simeq (\C^{2n},0)$:
\begin{itemize}
\item $\LL \times \C^{n*}$, defined by
  $\sigma_{\CC}$,    whose leaves are vertical cylinders over the leaves of $\LL$;
\item $\C^{n} \times \LL^{*}$, defined by   $\sigma^{*}_{\CC}$,    whose leaves are horizontal cylinders over the leaves of the
mirror foliation $\LL^{*}$.
\end{itemize}
Now, the  complexification  of $\omega$ is  a germ of integrable holomorphic $1-$form $\omega_{\CC}$
at $(\C^{2n},0)$ which defines the holomorphic foliation
of codimension one $\F_{\CC}$,  the complexification of $\F$. We have
\begin{equation}
\label{complexification-pencil}
 \omega_{\CC} = \frac{1}{2}(\eta + \bar{\eta})_{\CC} =
\frac{1}{2}( \eta_{\CC} + \eta_{\CC}^{*})  = \frac{1}{2}\phi_{\CC} \sigma_{\CC} + \frac{1}{2}\phi_{\CC}^{*}\sigma^{*}_{\CC}.
 \end{equation}
 In particular, $\omega_{\CC}$ is $(*)$-symmetric, that is $\omega_{\CC}^{*} = \omega_{\CC}$.
On the other hand, the complexification $\LL_{\CC}$ of $\LL$ is the holomorphic foliation of codimension
two at  $(\C^{2n},0)$ defined by
$  (\eta \wedge \bar{\eta})_{\CC} =
\eta_{\CC} \wedge \eta_{\CC}^{*}$ --- that is to say, $\LL_{\CC}$ is the product foliation $\LL \times \LL^{*}$. Since $\LL$ is tangent to $\F$, we have that $\LL_{\CC}$ is tangent to $\F_{\CC}$ --- and also to $\LL \times \C^{n*}$ and to $\C^{n} \times \LL^{*}$.

Observe that  both
$\eta_{\CC}$ and $\eta_{\CC}^{*}$ may have   codimension
one components in their singular sets, given by $\phi_{\CC} = 0$ and $\phi_{\CC}^{*}=0$. However,
$\phi_{\CC}$ and  $\phi_{\CC}^{*}$ are relatively prime in $\cl{O}_{2n}$, since otherwise,
by Proposition \ref{common-component}, the $1-$form $\omega$ would have a non-trivial
factor in $\cl{A}_{n \R}$. Hence,
  equation
\eqref{complexification-pencil} says that
  $\eta_{\CC} = \phi_{\CC} \sigma_{\CC}$ and $\eta_{\CC}^{*} = \phi_{\CC}^{*}\sigma^{*}_{\CC}$ define an integrable pencil $\cl{P} = \cl{P}(\eta_{\CC},\eta_{\CC}^{*})$
 that contains $\omega_{\CC}$, whose axis is the codimension two foliation   $\LL_{\CC} = \LL \times \LL^{*}$.
This geometric  fact  is the core of the proof of Theorem \ref{teo-main-local}, which we present next.

\medskip
\smallskip

\noindent\emph{Proof of Theorem \ref{teo-main-local}.} \
We  apply Theorem \ref{pencil-foliations} to the   integrable   pencil  $\cl{P} = \cl{P}(\eta_{\CC},\eta_{\CC}^{*})$.   The alternatives therein are in direct correspondence
with those in the theorem's statement.
Let us examine them.
\smallskip
\par \noindent \underline{Alternative \emph{(a)}}.
There exists a closed meromorphic $1-$form $\theta$ such that $d \varpi = \theta \wedge \varpi$
for every $1-$form $\varpi$ in $\cl{P}$.
In particular, we have the equations
\begin{equation}
\label{liouville} d \omega_{\CC} = \theta \wedge \omega_{\CC}  \qquad \text{and}  \qquad
 d \eta_{\CC}  = \theta \wedge \eta_{\CC}   \qquad \text{and} \qquad
  d \eta_{\CC}^{*}  = \theta \wedge \eta_{\CC}^{*} \, ,
 \end{equation}
which, by mirroring, turn into
\[  d \omega_{\CC} = \theta^{*} \wedge \omega_{\CC} \qquad \text{and} \qquad
 d \eta_{\CC}^{*}  = \theta^{*} \wedge \eta_{\CC}^{*}   \qquad \text{and} \qquad
  d \eta_{\CC}   = \theta^{*} \wedge \eta_{\CC} \, .\]
Taking into account these sets of equations,
  the closed $1-$form $\vartheta = \theta - \theta^{*}$
satisfies
\[  \vartheta \wedge \omega_{\CC}  =
 \vartheta \wedge \eta_{\CC}   =  \vartheta \wedge \eta_{\CC}^{*} = 0 .\]
Since $\omega_{\CC}$, $\eta_{\CC}$ and $\eta_{\CC}^{*}$ are independent, this is possible if and only if $\vartheta = 0$. Hence
$\theta$ is $(*)$-symmetric: $\theta = \theta^{*}$.

If $\theta$ is holomorphic then  
$\theta = d G$, for some  $G \in \cl{O}_{2n}$. Since $\theta = \theta^{*}$,
we also have that $\theta = d(G + G^{*})/2$. Thus,
replacing $G$ by
$(G + G^{*})/2$, we can suppose that $G$ is
$(*)$-symmetric. Then the $(*)$-symmetric function $H = \exp(G)$ is a unity in $\cl{O}_{2n}$
such that
 $ d \left( \varpi / H \right) = 0$ for every
$\varpi \in \cl{P}$. In particular $d \left( \eta_{\CC} / H \right) = 0$, so
we can find $F \in  \cl{O}_{2n}$ such that $\eta_{\CC}/ H = d F$.
Since $\eta_{\CC}$ is a $1-$form of type $(1,0)$, the same is true for $d F$. This means that,
in the variables $(z,w) \in \C^{2n}$,
$F$ is a function only in   $z$.
We have
\[ \frac{\omega_{\CC}}{H} =
\frac{1}{H} \left( \frac{\eta_{\CC} + \eta_{\CC}^{*}}{2} \right) = \frac{dF + d F^{*}}{2}.\]
Decomplexifying this expression, we get
\[ \frac{\omega}{h} =  \frac{df + d \bar{f}}{2} = \re( df),\]
where $h \in \cl{A}_{n\R}$ and $f \in \cl{O}_{n}$ are such that
$h_{\CC} = H$ and $f_{\CC} = F$.
This a particular case of item (a) in the statement,
with $\tau = df$ and $\psi = 1$
(or of item (b) with $\rho = f$ and $\kappa = h$).

Now, when the   $1-$form $\theta$ is purely meromorphic,
by \cite{cerveau1982} it
 can be written
as
\begin{equation}
\label{eq-closed-form}
\theta = \sum_{j=1}^{\ell} \lambda_{j} \frac{d F_{j}}{F_{j}} + d \left( \frac{G}{F_{1}^{k_{1}} \cdots F_{k}^{k_{\ell}}} \right) ,
\end{equation}
where $F_{1}, \ldots, F_{\ell} \in \cl{O}_{2n}$ are irreducible equations for the components of the polar set, $G  \in \cl{O}_{2n}$,
$\lambda_{1}, \ldots, \lambda_{\ell} \in \C$ and $k_{1}, \ldots, k_{\ell} \in \Z_{\geq 0}$.
Using that $\theta$ is $(*)$-symmetric,   we can refine this writing in the following way:
\begin{equation}
\label{closed-form}
  \theta = \sum_{j=1}^{r} \mu_{j} \frac{d h_{j}}{h_{j}} + \sum_{j=1}^{s} \left(\lambda_{j} \frac{d f_{j}}{f_{j}} +
\bar{\lambda}_{j} \frac{d f_{j}^{*}}{f_{j}^{*}} \right)+ d \left( \frac{G}{h_{1}^{m_{1}}
\cdots h_{r}^{m_{r}} \left(f_{1} f_{1}^{*} \right)^{n_{1}}
 \cdots  \left(f_{s} f_{s}^{*} \right)^{n_{s}} } \right),
 \end{equation}
where $h_{1}, \ldots, h_{r}, G   \in \cl{O}_{2n}$ are $(*)$-symmetric,
$f_{1}, \ldots, f_{s} \in \cl{O}_{2n}$ are not $(*)$-symmetric,
$\mu_{1}, \ldots, \mu_{r} \in \R$,
$\lambda_{1}, \ldots, \lambda_{s} \in \C$ and
$m_{1}, \ldots, m_{r}, n_{1}, \ldots, n_{s} \in \Z_{\geq 0}$.

Using  $ d \eta_{\CC}  = \theta \wedge \eta_{\CC}$    and $\eta_{\CC} = \phi_{\CC} \sigma_{\CC}$,
we have
\[  d   \phi_{\CC} \wedge \sigma_{\CC} +     \phi_{\CC} d \sigma_{\CC}   =
\phi_{\CC} \theta \wedge  \sigma_{\CC},  \]
which gives
\begin{equation}
\label{derivative}
     d \sigma_{\CC}   =
\left(    \theta - \frac{d   \phi_{\CC}}{\phi_{\CC}}  \right) \wedge  \sigma_{\CC} .
\end{equation}
This expression, when mirrored, becomes
\begin{equation}
\label{derivative*}
 d \sigma_{\CC}^{*}   =
\left(    \theta - \frac{d   \phi_{\CC}^{*}}{\phi_{\CC}^{*}}  \right) \wedge  \sigma_{\CC}^{*} .
\end{equation}
The following  fact is well known: if $\varpi$ is an integrable holomorphic $1-$form, with $\sing(\varpi)$
of codimension at least two, and   $\vartheta$   is a closed meromorphic $1$-form such that
 $d \varpi = \vartheta \wedge \varpi$, then the poles of $\vartheta$ are invariant by the foliation induced by $\varpi$. Applying this to \eqref{derivative} and to \eqref{derivative*}, we find that the
 poles of  $\theta -  d   \phi_{\CC} / \phi_{\CC}$ are invariant
 by the vertical foliation $\LL \times \C^{n*}$ induced by  $\sigma_{\CC}$, while
 the poles of  $\theta -  d   \phi_{\CC}^{*} / \phi_{\CC}^{*}$ are invariant by the
 horizontal foliation $ \C^{n} \times \LL^{*} $ induced by
 $\sigma_{\CC}^{*}$.

Now, regarding the expression  \eqref{closed-form} of $\theta$, we can say the following:
 \begin{itemize}
  \item there are no poles of the form $h_{j} = 0$, for $h_{j} \in \cl{O}_{2n}$ $(*)$-symmetric.
Indeed, otherwise  $h_{j}$ would be a factor of both $\phi_{\CC}$ and $\phi_{\CC}^{*}$, since $h_{j} = 0$ is   invariant by neither $\LL \times \C^{n*}$ nor $ \C^{n} \times \LL^{*}$.
The
  decomplexification of $h_{j}$  would thus engender
 a non-trivial factor of $\omega$ in $\A_{n\R}$.
 \item In other words, the poles of $\theta$ are either horizontal or vertical. Thus, in coordinates $(z,w) \in \C^{2n}$, we can suppose that, for each $j=1, \ldots, s$, there exists a unity
 $h_{j} \in \cl{O}_{2n}$  such that $f_{j} = h_{j} \tilde{f}_{j}$, where $\tilde{f}_{j} = \tilde{f}_{j}(z) \in \cl{O}_{2n}$
 is a function only in the variables $z$. As a consequence,
  $f_{j}^{*} =  h_{j}^{*} \tilde{f}_{j}^{*}$,
  where $\tilde{f}_{j}^{*} = \tilde{f}_{j}^{*}(w) \in \cl{O}_{2n}$
 is a function only in the variables $w$.
 \item All poles must then be cancelled by either $d   \phi_{\CC} / \phi_{\CC}$ or by
  $ d   \phi_{\CC}^{*} / \phi_{\CC}^{*}$. Thus, they are all simple and $n_{1} = \cdots = n_{s} = 0$.
 \item For this same reason, there exist a  unity $g_{0} \in \cl{O}_{2n}$ and $r_{1}, \ldots, r_{s} \in \Z^{*}$ such that
 \[ \phi_{\CC} = g_{0}  ( \tilde{f}_{1}^{*} )^{r_{1}} \cdots  (  \tilde{f}_{s}^{*} )^{r_{s}}
 = g_{0} \, \Psi^{*}  \qquad \text{and} \qquad
 \phi_{\CC}^{*} = g_{0}^{*} \tilde{f}_{1}^{r_{1}} \cdots  \tilde{f}_{s}^{r_{s}}
 = g_{0} ^{*} \, \Psi, \]
 where $\Psi =  \tilde{f}_{1}^{r_{1}} \cdots \tilde{f}_{s}^{r_{s}} \in \cl{O}_{2n}$ is a function
 in the variables $z$ only. Besides,
  $\lambda_{j} = r_{j} \in \Z^{*}$   for all $j=1,\ldots,s$.
  \item  The exact part can be written in the form $dG = d h_{0} / h_{0}$, where $h_{0} = \exp(G)$ is a $(*)$-symmetric unity in $\cl{O}_{2n}$.
 \end{itemize}
Taking into account all these comments, we can write
\[ \theta =  \sum_{j=1}^{s} r_{j} \left( \frac{d(h_{j} \tilde{f}_{j})}{h_{j} \tilde{f}_{j}}
+  \frac{d(h_{j}^{*} \tilde{f}_{j}^{*} )}{h_{j}^{*}  \tilde{f}_{j}^{*} } \right) + \frac{d h_{0}}{h_{0}}
=  \frac{d (\Psi \Psi^{*})}{\Psi \Psi^{*}} + \frac{d H}{H} , \]
where
 $H = h_{0} ( h_{1} h_{1}^{*})^{r_{1}} \cdots (h_{s}h_{s}^{*})^{r_{s}}$
 is a $(*)$-symmetric unity in $\cl{O}_{2n}$.
Thus, by considering the integrable pencil $(1/H) \cl{P}$ instead of $\cl{P}$ ---
 which corresponds to performing the above calculations to the Levi-flat $1-$form $(1/h) \omega$, where $h \in \A_{n \R}$ is
 a unity such that $h_{\CC} = H$ --- we can suppose that
 \[ \theta =   \theta_{\mathcal P} =   \frac{d (\Psi \Psi^{*})}{\Psi \Psi^{*}}.\]

Inserting this   in   equations  \eqref{liouville}, we find
\begin{equation}
\label{eq-3closed-forms}
 d \left( \frac{1}{ \Psi \Psi^{*}} \omega_{\CC} \right) =
d \left( \frac{1}{\Psi \Psi^{*}} \eta_{\CC} \right) =
d \left( \frac{1}{\Psi \Psi^{*}} \eta_{\CC}^{*} \right) = 0 .
\end{equation}
Let us define
$ \zeta_{\CC} =   \eta_{\CC}/ ( g_{0}  \Psi \Psi^{*} ) = 
 \sigma_{\CC} / \Psi$ and, symmetrically,
 $\zeta_{\CC}^{*} =  \eta_{\CC}^{*}/ (g_{0}^{*}  \Psi \Psi^{*})  =   \sigma_{\CC}^{*} / \Psi^{*}$.
Observe that, with respect to the decomposition $\C^{n} \times \C^{n*} \simeq \C^{2n}$, the $1-$form   $\zeta_{\CC}$ is of type $(1,0)$, expressed only in the variable
$z$, whereas  $\zeta_{\CC}^{*}$, of type $(0,1)$, is written  only in the variable $w$.
The second  closed differential form in \eqref{eq-3closed-forms} then reads
\[ 0 =  d \left( g_{0} \zeta_{\CC} \right)
=  d   g_{0}   \wedge \zeta_{\CC}  +   g_{0} \, d \zeta_{\CC}. \]
Decomposing $d = \partial_{z} + \partial_{w}$ into exterior derivatives with respect
to the variables $z$ and $w$ and assembling $2-$forms of the same type, we find that
\[  \partial_{w}   g_{0}   \wedge \zeta_{\CC} = 0 .\]
This gives $\partial_{w} g_{0} = 0$, implying that $g_{0} = g_{0}(z)$ depends exclusively on the variables $z$.

Let us   define $\tau_{\CC} =  g_{0} \zeta_{\CC}  =  \eta_{\CC}/ ( \Psi \Psi^{*} ) =  g_{0} \sigma_{\CC} / \Psi$. It is  a meromorphic $1-$form in the variable $z$, closed by \eqref{eq-3closed-forms}.
Returning to formula \eqref{complexification-pencil}, we  get
\[ \omega_{\CC} = \frac{\eta_{\CC} + \eta_{\CC}^{*}}{2} = \Psi \Psi^{*} \frac{\tau_{\CC} + \tau_{\CC}^{*}}{2} .\]
Decomplexifying this expression, we find
\[ \omega  =  \psi \bar{\psi}  \frac{\tau  + \bar{\tau}}{2} = |\psi|^{2} \re(\tau) ,\]
where $\psi \in \cl{O}_{n}$ is such that $\psi_{\CC} = \Psi$ and
  $\tau$ is a germ of closed meromorphic $1-$form at $(\C^{n},0)$ whose complexification is $\tau_{\CC}$.
Observe that
 $\tau$, being a meromorphic multiple of $\sigma$, induces the Levi foliation $\LL$
and also  that $\psi$ is an equation for the poles of $\tau$.
Finally, reincorporating the unity $h \in \A_{n \R}$, we get that $\omega$ has the desired shape.

\smallskip
\par \noindent \underline{Alternative \emph{(b)}}.
 The axis $\LL \times \LL^{*}$ of the pencil $\cl{P}$ has a meromorphic first integral.
This means that there exists a non-constant germ of meromorphic function $F$ at $(\C^{n} \times \C^{n*},0)$
such that
\[ d F \wedge \eta_{\CC}  \wedge \eta_{\CC}^{*} = 0 .\]
We can suppose that   $F$ is $(*)$-symmetric. If this is not so, we
apply  the $(*)$-operator,  finding
\[ d F^{*} \wedge \eta_{\CC}  \wedge \eta_{\CC}^{*} = 0. \]
Thus
\[ d \left( F + F^{*} \right) \wedge \eta_{\CC}  \wedge \eta_{\CC}^{*} = 0 .\]
If  $F + F^{*}$ is non-constant, replacing $F$ by  $F + F^{*}$, we have a $(*)$-symmetric
meromorphic first integral for $\LL \times \LL^{*}$.
On the other hand,  $F + F^{*}$ is constant if and only if
 $F + F^{*} = c$ for some $c \in \R$. Thus, putting $F - c/2$ in place of $F$,
we can suppose that $F + F^{*} = 0$. Thus, the non-constant meromorphic
function $i F$ is such that $(i F)^{*} = (-i)F^{*} = i F$.

Now, by Proposition \ref{prop-meromorhpic-function}, we can write $F = G/H$,
where $G,H \in \cl{O}_{2n}$ are relatively prime $(*)$-symmetric functions.
Let $g,h \in \cl{A}_{n\R}$ be   real analytic functions such that
$g_{\CC} = G$ and $h_{\CC} = H$. Thus, $f = g/h$ is a non-constant real meromorphic function
in $\cl{Q}_{n \R}$ such that
\[ d f \wedge \eta   \wedge \bar{\eta} = 0 ,\]
which means that $\LL$ is tangent to the levels of $f$.
By Theorem \ref{teo-tangent-to-levels}, $\LL$ has a meromorphic first integral $\rho \in \cl{M}_{n}$.
Therefore, there exists
  a real meromorphic function $\kappa \in \cl{Q}_{n}$ such that
$\eta =   \kappa d \rho .$
Consequently
 \[ \omega =  \frac{1}{2} \left( \kappa d \rho  + \bar{\kappa} d \bar{\rho}   \right)
= \re(\kappa d \rho). \]
As seen in Example \ref{ex-meromorphic-integral}, the integrability   for $\omega$  implies that $\kappa/ \bar{\kappa}$
   is constant along the leaves of $\LL$. This finishes the proof.
\qed



\section{The global case: algebraic Levi-flat foliations}
\label{section-algebraic}

Throughout this section we consider the projective space
$\Pe^{n} = \Pe^{n}_{\C}$ obtained by the usual identification of points
$\C^{n+1} \setminus \{0\}$ lying in the same line through the origin. We identify geometric
objects in $\Pe^{n}$ with their cones in $\C^{n+1} \setminus \{0\}$ and work
  in coordinates $z = (z_0,\ldots,z_{n}) \in \C^{n+1}$.

In this way, a real algebraic foliation $\F$ of codimension one in $\Pe^{n}$  is
defined in $\C^{n+1}$ by an integrable real $1-$form $\omega$ with relatively prime polynomial coefficients satisfying the following conditions:
\begin{enumerate}[label={(\roman*)}]
\item   $i_{\bm{r}}\omega  = i_{\bm{r}}\omega^{\sharp}=0$, where $i_{\bm{r}}$ denotes the contraction by the real radial vetor
field; \smallskip
\item $\left(\Lambda_{\lambda}\right)^{*} \omega = |\lambda|^{2d} \omega$ for some $d \geq 1$ and for every $\lambda \in \C^{*}$, where $\Lambda_{\lambda}$ is the homothety of $\C^{n}$ given by   $\Lambda_{\lambda}(z) = \lambda z$.
\end{enumerate}

Employing the notation of Subsection \ref{section-levi}, we write
\[\omega =    \sum_{j=0}^{n} \left( A_{j} d x_{j} + B_{j} d y_{j} \right) \qquad \text{and} \qquad
\omega^{\sharp} =    \sum_{j=0}^{n} ( - B_{j} d x_{j} + A_{j} d y_{j} )
,\]
where $A_{j}, B_{j} \in \C[z,\bar{z}]$ are such that $A_{j} = \bar{A}_{j}$
and $B_{j} = \bar{B}_{j}$ for every $j=0,\ldots,n$.
We also take the canonical decomposition $\omega= (\eta + \bar{\eta})/2$,
where
\[\eta  =  \sum_{j=0}^{n} \frac{A_{j} - i B_{j}}{2} dz_{j}
=  \sum_{j=0}^{n} \varepsilon_{j} dz_{j}.\]
Condition  (i)  expresses  that   $\F$ contains all complex lines through the origin of $\C^{n+1}$.
In  coordinates, $\bm{r} = \sum_{j=0}^{n} \left( x_{j} \partial/ \partial x_{j} +  y_{j} \partial/ \partial y_{j} \right)$ and we have
 \[  \sum_{j=0}^{n} (x_{j} A_{j}   +  y_{j}B_{j})    = 0 \qquad \text{and} \qquad
 \sum_{j=0}^{n}( - x_{j} B_{j}   +  y_{j} A_{j})   = 0 .\]
 This is the same of asking the vanishing of the contraction of $\eta$ by the
complex radial vetor
field $\bm{R} = \sum_{j=0}^{n}   z_{j} \partial/ \partial z_{j}$, that is,
$\sum_{j=0}^{n} z_{j} \varepsilon_{j}    = 0 $.

On its turn, condition (ii) asserts that the distribution of real hyperplanes induced by $\omega$
descends, in a  well defined way, to $\Pe^{n}$.
Suppose, initially, that the coefficients of $\eta$,  $\varepsilon_{j} \in \C[z,\bar{z}]$, are bihomogeneous of bidegree $(d-1,d)$, for some $d \geq 1$. Then, for $\lambda \in \C^{*}$,  we have
\begin{equation}
\label{eq-lambda-eta}
 \left(\Lambda_{\lambda}\right)^{*} \eta = \lambda \sum_{j=1}^{n} \lambda^{d-1} \bar{\lambda}^{d}\varepsilon_{j}(z,\bar{z}) dz_{j}
= |\lambda|^{2d} \eta .
\end{equation}
Similarly,
$ \left(\Lambda_{\lambda}\right)^{*} \bar{\eta} =  |\lambda|^{2 d } \bar{\eta},$
which gives
\[ \left(\Lambda_{\lambda}\right)^{*} \omega = \left(\Lambda_{\lambda}\right)^{*} \left(\frac{  \eta + \bar{\eta}}{2} \right) =   |\lambda|^{2d} \omega.\]
Reciprocally, if condition (ii) holds,  it is straightforward that
 the coefficients $\varepsilon_{j}$ of $\eta$ must be bihomogeneous of bidegree $(d-1,d)$.

Let us suppose that $\F$ is an algebraic foliation
  in $\Pe^{n}$ that is Levi-flat, meaning that   it is a  local Levi-flat  foliation
 at each point of $\Pe^{n}$. Thus, the Levi foliation $ \LL  = \LL(\F)$ is a real analytic complex foliation in $\Pe^{n}$  of complex codimension one. In consonance with the local study of
 the previous sections, we   suppose that $\LL$ is a holomorphic foliation. Thus, we associate to $\LL$
  a degree $d_{0} \geq 0$, which counts its tangencies, with multiplicities, with a generic line
in $\Pe^{n}$.
In   coordinates $z=(z_0,\ldots,z_{n}) \in \C^{n+1}$, the foliation $\LL$ is induced by an integrable polinomial $1$-form
$\sigma = \sum_{j=0}^{n} \alpha_{j}(z) dz_{j}$, whose coefficients  $\alpha_{j} \in \C[z]$ are relatively prime homogeneous polynomials of degree $d_{0} +1$, which contracts to zero by the complex radial vector field, that is,
$\sum_{j=0}^{n} z_{j} \alpha_{j}    = 0 $.
Since $\eta$ defines the same distribution of complex hyperplanes as $\sigma$,   there exists a bihomogeneous polynomial $\varphi \in \C[z,\bar{z}]$ of bidegree $(d  - d_{0} -2, d ) $
such that
\[ \eta = \varphi \sigma.\]
In particular, $d_{0} \leq d + 2$.

Now we analyze the  two alternatives of Theorem \ref{teo-main-local} for
$\F$, considered as a local foliation  at  $0 \in \C^{n+1}$:

\smallskip
\par \noindent \underline{Alternative \emph{(a)}}.
$ \omega  =  h |\psi|^{2} \re( \tau)$, where
$\tau$ is a closed meromorphic $1-$form that induces $\G$,
  $\psi \in \cl{O}_{n+1}$
is an equation for   polar set of $\tau$ and
  $h \in \cl{A}_{n+1,\R}$ is a unity.
  The $1-$form $\tau$ goes down to $\Pe^{n}$, that is, for every $\lambda \in \C^{*}$  the $1-$forms $\tau$ and $\left(\Lambda_{\lambda}\right)^{*}   \tau$ define the same
distribution of complex hyperplanes  and $i_{\bm{R}} \tau = 0$, where $\bm{R}$ is the complex radial vector field.
Hence, $\tau$ has a writing as
in \eqref{eq-closed-form} (we borrow the notation, putting $\tau$ instead of $\theta$), where $F_{1},\ldots,F_{\ell},G \in \C[z]$ are homogeneous polynomials
satisfying:
\begin{itemize}
\item $ \deg G = k_{1} \deg F_{1}+ \cdots + k_{\ell} \deg F_{\ell} $; \smallskip
\item $\lambda_{1} \deg F_{1}+ \cdots + \lambda_{\ell} \deg F_{\ell} = 0$ (Residue Theorem). \smallskip
\end{itemize}
The function $\psi$ is an equation of the poles of $\tau$, thus it is a homogeneous
polynomial in $\C[z]$. Finally, the homogeneity of
$ \omega$, $\psi$ and $\tau$ gives that
$h$ must be a  constant, supposed to be 1.

\smallskip
\par \noindent \underline{Alternative \emph{(b)}}. $ \omega  =  \re(\kappa d \rho)$, where
   $\rho \in \cl{M}_{n+1}$ is a complex meromorphic function,
$\kappa \in \cl{Q}_{n+1}$ is
a real meromorphic function such that $\kappa/\bar{\kappa}$ is constant along the leaves of $\G$. Again, since $\rho$ is constant along the leaves of a foliation that goes down to $\Pe^{n}$, all leaves of $\LL$ are algebraic and $\rho$ must be a complex rational function in $\Pe^{n}$, given in homogeneous
coordinates as the quotient of relatively prime homogeneous polynomials in $\C[z]$ of the same degree. From \eqref{eq-lambda-eta}, considering that
$\eta = \kappa d \rho$, we see that $\kappa$ must be a real rational function of bidegree $(d,d)$
defined as the quotient of relatively prime homogeneous polynomials in $\C[z,\bar{z}]$.


We summarize this discussion in the following    global version of Theorem \ref{teo-main-local}:
\begin{maintheorem}
\label{teo-main-global}
Let $\F$ be a  real algebraic Levi-flat foliation on  $\Pe^{n}$, $n \geq 2$, induced in
homogeneous coordinates  $z= (z_{0},\cdots,z_{n}) \in \C^{n+1}$ by an integrable   $1-$form $\omega$
with polynomial coefficients satisfying  conditions (i) and (ii) above.
If the Levi foliation $\LL = \LL(\F)$ is holomorphic, then   at least one of the
following two possibilities occurs:
\begin{enumerate}[label=(\alph*)]
\item $\omega  =  |\psi|^{2} \re( \tau)$, where $\tau$ is closed complex rational $1-$form that defines $\LL$ and $\psi \in \C[z]$ is a homogeneous
equation for   polar set of $\tau$.
\item $ \omega  =   \re(\kappa d \rho)$, where $\rho$ is a complex rational function   that is a first integral for $\LL$,
$\kappa$ is a real rational function of bidegree $(d,d)$, for some $d \geq 1$, defined as a quotient of  relatively prime homogeneous polynomials in $\C[z,\bar{z}]$, such that $\kappa/ \bar{\kappa}$ is constant
along the leaves of $\G$.
\end{enumerate}
\end{maintheorem}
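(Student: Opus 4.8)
The plan is to treat the affine cone of $\F$ as a germ of real analytic Levi-flat foliation at $0 \in \C^{n+1}$ and to feed it into the local classification, Theorem \ref{teo-main-local}, after which the homogeneity conditions (i) and (ii) will force the resulting local meromorphic data to be globally rational. Concretely, the polynomial $1$-form $\omega$ defines a germ of Levi-flat foliation at the origin whose Levi foliation is the germ of the holomorphic foliation $\LL$ induced by $\sigma$; thus Theorem \ref{teo-main-local} applies and places us in exactly one of the alternatives (a) or (b). The remaining work in each case is to promote the germified objects $\tau$, $\psi$, $\rho$, $\kappa$ to genuine rational objects on $\Pe^{n}$ and to eliminate the analytic unit $h$, using that both $\F$ and $\LL$ descend to $\Pe^{n}$.

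For alternative (a), the local theorem gives $\omega = h|\psi|^{2}\re(\tau)$ with $\tau$ a closed meromorphic $1$-form inducing $\LL$. Since $\LL$ descends to $\Pe^{n}$, the form $\tau$ descends as well and satisfies $i_{\bm{R}}\tau = 0$. I would invoke the structure theorem for closed meromorphic $1$-forms \cite{cerveau1982}, writing $\tau$ as in \eqref{eq-closed-form}; invariance under the homotheties $\Lambda_{\lambda}$ then forces $F_{1},\ldots,F_{\ell},G$ to be homogeneous polynomials, homogeneity of the exact part yields $\deg G = \sum_{i}k_{i}\deg F_{i}$, and descent to $\Pe^{n}$ gives the balance $\sum_{i}\lambda_{i}\deg F_{i}=0$. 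Consequently $\psi$, being an equation of the polar set, is a homogeneous polynomial, and matching the bidegrees of $\omega$, $|\psi|^{2}$ and $\re(\tau)$ forces the real analytic unit $h$ to be constant, normalized to $1$. This is exactly form (a) of the statement.

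For alternative (b), the local theorem yields a meromorphic first integral $\rho$ for $\LL$ and a real meromorphic $\kappa$ with $\omega = \re(\kappa\,d\rho)$ and $\kappa/\bar{\kappa}$ constant along the leaves of $\LL$. The decisive observation is that $\LL$ descends to $\Pe^{n}$ and admits a first integral, so its leaves are algebraic; hence $\rho$ may be taken to be a complex rational function on $\Pe^{n}$, expressed in homogeneous coordinates as a quotient of relatively prime homogeneous polynomials of equal degree. Reading off the bidegree of $\eta = \kappa\,d\rho$ from \eqref{eq-lambda-eta} then pins down $\kappa$ as a real rational function of bidegree $(d,d)$, a quotient of relatively prime bihomogeneous polynomials in $\C[z,\bar{z}]$. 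Collecting the two cases gives precisely the asserted global models.

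I expect the main obstacle to be the globalization in alternative (b): passing from the merely local meromorphic first integral produced by Theorem \ref{teo-main-local} to a genuinely rational first integral on $\Pe^{n}$. This requires knowing that the level sets of the local $\rho$ coincide with the leaves of the globally defined $\LL$, so that descent to $\Pe^{n}$ together with the presence of a local first integral forces the leaves to be algebraic, and then that these algebraic leaves are the fibers of a rational map. A milder parallel difficulty in alternative (a) is the bidegree bookkeeping ensuring that every component of the polar divisor and of $G$ is homogeneous and that the residue balance is consistent; once homothety invariance is imposed, however, this reduces to routine degree matching.
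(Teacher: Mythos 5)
Your proposal follows essentially the same route as the paper: the paper likewise treats $\F$ as a germ at the cone point $0\in\C^{n+1}$, applies Theorem \ref{teo-main-local}, and then uses conditions (i)--(ii) together with the writing \eqref{eq-closed-form} from \cite{cerveau1982}, the homothety identity \eqref{eq-lambda-eta}, and degree matching to make $F_{1},\ldots,F_{\ell},G,\psi$ homogeneous, kill the unit $h$, and pin down the bidegree of $\kappa$. Even the step you flag as the main obstacle --- algebraicity of the leaves of $\LL$ and rationality of $\rho$ in alternative (b) --- is dispatched in the paper by exactly the brief argument you sketch (a local first integral plus descent to $\Pe^{n}$ forces algebraic leaves, hence a rational first integral), so your proposal matches the paper's proof in both structure and level of detail.
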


We finish with an  illustration on how  to produce examples of the situation $(b)$ in the theorem:
\begin{example} {\rm
 Let $\rho = F/G$ be a complex rational function, where  $F,G \in \C[z]$ are relatively prime homogeneous polynomials of the same degree, whose levels define a holomorphic foliation $\G$ on $\Pe^{n}$. Let $\upsilon = R/S$ be a complex rational function in the variables
$u=(u_{1},u_{2}) \in \C^{2}$, defined as a quotient of relatively prime homogeneous polynomials $R,S \in \C[u]$.
Then $\upsilon \circ \rho = R(F,G)/S(F,G)$ is also a rational first integral for $\G$.
Suppose also that $u_{2}^{2}$ factors $R$, so that $G^{2}$ divides $R(F,G)$, making
 $R(F,G) d \rho$   a polynomial $1-$form.
Let $\kappa = R(F,G) \xbar{S(F,G)}$. It is a homogeneous polynomial in $\C[z,\bar{z}]$ of bidegree $(d,d)$, where $d = \deg(R(F,G)) = \deg(S(F,G))$, such that $\kappa/ \bar{\kappa} = \upsilon \circ \rho /
\xbar{\upsilon \circ \rho}$ is constant along the leaves
of $\G$. Thus,   up to cancelling    factors of the form $|\varphi|^{2}$, where $\varphi \in \C[z]$ is a homogeneous factor of $F$ or $G$,
 the  $1-$form $\omega  =   \re(\kappa d \rho)$  defines a Levi-flat foliation on $\Pe^{n}$ whose
underlying foliation is $\G$.
}\end{example}

\bibliographystyle{plain}
\bibliography{referencias}

\medskip  \medskip  \medskip

\noindent
Arturo Fern\'andez-P\'erez\\
Departamento de Matem\'atica \\
Universidade Federal de Minas Gerais \\
Av. Ant\^onio Carlos, 6627  \  C.P. 702  \\
30123-970  --
Belo Horizonte -- MG,
BRAZIL \\
fernandez@ufmg.br

\medskip \medskip \medskip

\noindent
Rog\'erio  Mol  \\
Departamento de Matem\'atica \\
Universidade Federal de Minas Gerais \\
Av. Ant\^onio Carlos, 6627  \  C.P. 702  \\
30123-970  --
Belo Horizonte -- MG,
BRAZIL \\
rmol@ufmg.br

\medskip \medskip  \medskip
\noindent
Rudy Rosas \\
Pontificia Universidad Cat\'olica del Per\'u \\
Av. Universitaria 1801  \\
Lima,
Peru \\
rudy.rosas@pucp.edu.pe

\end{document}